\documentclass[journal,twoside]{IEEEtran}

\usepackage{amsmath,amssymb}

\DeclareSymbolFont{bbold}{U}{bbold}{m}{n}
\DeclareSymbolFontAlphabet{\mathbbold}{bbold}

\usepackage{amsthm}
\theoremstyle{remark}
\newtheorem{thm}{Theorem}
\theoremstyle{remark}
\newtheorem{lem}{Lemma}
\theoremstyle{remark}

\theoremstyle{remark}
\newtheorem{assum}{Assumption}
\theoremstyle{remark}

\theoremstyle{remark}
\newtheorem*{rem}{Remark}
\theoremstyle{remark}
\newtheorem{defi}{Definition}
\theoremstyle{remark}
\newtheorem{pro}{Proposition}

\usepackage{accents}
\newcommand{\ubar}[1]{\underaccent{\bar}{#1}}

\DeclareMathOperator{\inte}{int}
\DeclareMathOperator{\cl}{cl}
\DeclareMathOperator{\re}{Re}
\DeclareMathOperator{\im}{Im}

\usepackage{nomencl}
\makenomenclature

\usepackage{bm}
\newcommand{\bg}[1]{\boldsymbol{\mathbf{#1}}}

\usepackage[dvipsnames]{xcolor}
\usepackage[colorlinks,linkcolor=blue,citecolor=ForestGreen]{hyperref}

\usepackage{graphicx,algpseudocode}
\usepackage[boxed]{algorithm2e}

\usepackage{pgfplots}
\pgfplotsset{compat=1.14}


\allowdisplaybreaks

\usepackage{cite}

\usepackage{tikz}

\usepackage{multirow}

\usepackage{booktabs}


\newenvironment{customlegend}[1][]{%
\begingroup
\csname pgfplots@init@cleared@structures\endcsname
\pgfplotsset{#1}%
}{%
\csname pgfplots@createlegend\endcsname
\endgroup
}%
\def\addlegendimage{\csname pgfplots@addlegendimage\endcsname}


\begin{document}
%
\title{Novel Region of Attraction Characterization for Control and Stabilization of Voltage Dynamics}
%
%
%

\author{Bai~Cui,~\IEEEmembership{Member,~IEEE,}
    Ahmed~Zamzam,~\IEEEmembership{Member,~IEEE,}
    Guido~Cavraro,~\IEEEmembership{Member,~IEEE,}
    and~Andrey~Bernstein,~\IEEEmembership{Senior~Member,~IEEE}
    }
\maketitle

\begin{abstract}
In this paper, we study the monitoring and control of long-term voltage stability considering load tap-changer (LTC) dynamics. We show that under generic conditions, the LTC dynamics always admit a unique stable equilibrium. For the stable equilibrium, we characterize an explicit inner approximation of its region of attraction (ROA). Compared to existing results, the computational complexity of the ROA characterization is drastically reduced. A quadratically constrained linear program formulation for the ROA characterization problem is proposed. In addition, we formulate a second-order cone program for online voltage stability monitoring and control exploiting the proposed ROA characterization, along with an ADMM-based distributed algorithm to solve the problem. The efficacy of the proposed formulations and algorithms is demonstrated using a standard IEEE test system.
\end{abstract}

\begin{IEEEkeywords}
Voltage stability, load tap changer, region of attraction, distributed optimization.
\end{IEEEkeywords}

%
\IEEEpeerreviewmaketitle

\section{Introduction}
\IEEEPARstart{P}{ower} systems have traditionally been designed with sufficient margins against disturbances and contingencies. However, with ever increasing power demand and competitive electricity market, they are being operated closer to the operational boundaries, in other words, their loading margins to the operational boundaries are shrinking. Systems with insufficient loading margins run the risk of resulting in catastrophic outcomes such as cascading failure and large-scale blackouts. Voltage collapse events have been the main culprits in several major blackouts worldwide.

The destabilizing behavior of load tap-changer (LTC) is one of the prime mechanisms of voltage collapse in bulk power systems. The post-disturbance secondary voltage restoration by LTC leads to load power restoration, which may exacerbate the already impaired post-disturbance transfer capability and accelerate long-term voltage collapse. Traditionally, long-term voltage instability is generally modeled by saddle-node bifurcation of the underlying quasi-steady-state model. It is therefore customary in long-term voltage stability analysis to adopt steady-state power flow model with constant power loads and examine conditions associated with the singularity of power flow Jacobian. Most online voltage stability indices are derived based on this model \cite{kessel1986estimating, tiranuchit1988posturing, glavic2011short, wang2011voltage, wang2017necessary, cui2017voltage}. While constant power load model captures the stability margin and its sensitivities to system parameters when the system loses its stability through the loss of equilibrium of the long-term dynamics \cite{dobson2011irrelevance}, it is not capable of modeling the other common instability mechanism: the instability through a lack of attraction towards the stable long-term equilibrium \cite[Sect. 8.2.2]{van2007voltage}. To capture both mechanisms, load model with explicit modeling of LTC dynamics should be employed. In this work, we study the long-term voltage stability problem incorporating LTC dynamics. 

Long-term voltage stability of networked LTCs has been studied in the seminal work \cite{liu1989analysis} using a continuous-time approximation model where the stability of equilibria has been identified and the region of attraction (ROA) of the stable equilibrium was characterized. Further, characterization of the ROA for discrete-time LTC model appears in \cite{vournas2006region}. However, explicit characterization is only given for radial three-bus systems excluding mesh structures that are evident in the vast majority of transmission networks.

To restore a long-term equilibrium after system disturbance, emergency voltage stability control is needed, which can be done at generation side (generator terminal voltage boosting), transmission side (reactive power compensation), or load side (load reduction) \cite[Sect. 8.6]{van2007voltage}. Emergency control countermeasures at both the generation and transmission sides assume the availability of certain reactive power reserve, which may be nonexistent in emergency scenario with degraded system condition. On the other hand, load control can be realized either through direct load shedding, or through emergency LTC controls including tap blocking, set-point reduction, and tap reversing \cite{vournas2004load}, all of which cause power/voltage quality degradation for end-users.

The proliferation of distributed energy resources (DER) in distribution system provides an alternative, where recent advancement in control technologies has made real-time coordination of massive amount of DERs a reality \cite{bernstein2019real}. With proper coordination, the aggregated DERs are able to provide various transmission-level services such as frequency regulation and voltage support \cite{arnold2018model, dall2018optimal, valverde2019coordination}. It is hypothesized in \cite{singhal2018framework} that they can be viewed as `mini-static var compensator' and provide local reactive power support to the grid by compensating reactive power demand. In fact, there is plausible reason for DERs to participate in emergency voltage stability control: a case study has shown that the DER controllers can contribute to bulk power system voltage collapse if they are agnostic about the emergency condition and not properly controlled under such circumstances \cite{aristidou2017contribution}. A similar approach of coordinated distributed load shedding and LTC control has been shown to be effective in emergency control \cite{otomega2014two}. In this work, we assume the availability of distributed DER controllers which command required reactive power support at the secondary level and focus on the determination of optimal aggregated reactive power support at the transmission system level.

In this paper, we analyze the stability properties of networked LTCs including the stability of equilibria and explicit characterization of ROA of the stable equilibrium. The novelty of the paper is two-fold. First, two new technical results concerning the stability of networked LTC system are given: 1) We show that the stable equilibrium is unique generically; and 2) we provide a novel ROA characterization of the stable equilibrium which can be efficiently computed.
Both conditions are built on and improve those in \cite{liu1989analysis}. Second, based on the technical results, a distributed optimization algorithm is proposed for stability monitoring and emergency reactive power support computation.

The remainder of the paper is organized as follows. The notations, system modeling, and problem statement are given in Section \ref{sect:notations}. Novel stability results, i.e., uniqueness of stable equilibrium and characterization of its ROA are presented in Section \ref{sect:ROA}. Optimization formulation of the stability monitoring and control problem as well as its distributed implementation are discussed in Section \ref{sect:opt}. Finally, case studies coroborating the efficacy of the proposed approaches are demonstrated in Section \ref{sect:sim}.

\section{Notation and System Modeling} \label{sect:notations}

\subsection{Notation}

The set of real, positive real, and complex numbers are denoted by $\mathbb{R}$, $\mathbb{R}_{>0}$, and $\mathbb{C}$, respectively. Scripted capital letters $\mathcal{A}, \mathcal{B}, \ldots$ are used to represent other sets. The interior, closure, and boundary of a set $\mathcal{A}$ are denoted by $\inte(\mathcal{A})$, $\cl(\mathcal{A})$, and $\partial \mathcal{A}$, respectively. 
Vectors and matrices are represented by boldface letters while scalars are represented by normal ones.
For matrix $\bg{A} \in \mathbb{R}^{m\times n}$, $\bg{A}^\top$ is the transpose of $\bg{A}$. $\bg{a}_i$ denotes the vector formed by the $i$th row of $\bg{A}$ and $\bg{A}_i$ denotes the vector formed by the $i$th column of $\bg{A}$. For real square matrices $\bg{M}, \bg{N}$, the expression $\bg{M} > 0$ (resp. $\bg{M} > \bg{N}$) means $\bg{M}$ (resp. $\bg{M} - \bg{N}$) is an element-wise positive matrix. For vector $\bg{x} \in \mathbb{R}^n$, $\| \bg{x} \|_p$ denotes the $\ell_p$ norm of $\bg{x}$ where $p \in [1, \infty) \cup \{\infty\}$ and $[\bg{x}] \in \mathbb{R}^{n \times n}$ denotes the associated diagonal matrix. The $n$-dimensional open ball of $\|\cdot\|_{p}$ centered at $\bg{c}$ with radius $r$ is $\mathcal{B}_p(\bg{c}, r) = \{ \bg{x}\in\mathbb{R}^n :\; \| \bg{x} - \bg{c} \|_p < r \}$. $\mathbbold{0}$ and $\mathbbold{1}$ are the vectors of all $0$'s and $1$'s of appropriate sizes. The cardinality of a set or the absolute value of a (possibly) complex number is denoted by $|\cdot|$. $j = \sqrt{-1}$ is the imaginary unit. $\re(\cdot)$ and $\im(\cdot)$ denote the real and imaginary part of a complex number. 

\subsection{System Modeling}

We consider a connected and phase-balanced power system with $n+m$ buses operating in steady-state. The underlying topology of the system can be described by an undirected connected graph $\mathcal{G} = (\mathcal{V}, \mathcal{E})$, where buses are modeled as vertices $\mathcal{V}$ and lines are represented by edges $\mathcal{E} \subseteq \mathcal{V} \times \mathcal{V}$. The buses are categorized into two mutually exclusive sets: generators ($\mathcal{V}_G$) and loads ($\mathcal{V}_L$), such that $\mathcal{V}_G \cup \mathcal{V}_L = \mathcal{V}$ and $\mathcal{V}_G \cap \mathcal{V}_L = \emptyset$. The load buses are numbered from $1$ to $n$ and the generator buses are numbered from $n+1$ to $n+m$, i.e., $\mathcal{V}_L = \{1, \ldots, n\}$ and $\mathcal{V}_G = \{n+1, \ldots, n+m\}$. Every bus $i$ in the system has voltage $V_i$. We assume the induced subgraph of $\mathcal{G}$ with vertex set $\mathcal{V}_L$ is connected, i.e., removing vertex set $\mathcal{V}_G$ and the incident lines from $\mathcal{G}$ does not disconnect $\mathcal{G}$.

Each load bus $i$ is modeled as a constant admittance behind an LTC with tap ratio $r_i : 1$ where $r_i > 0$ is normally around 1. The voltage on the secondary side of the LTC is denoted by $V_{s,i}$, so we have $V_i / V_{s,i} = r_i$. The admittance at the secondary side of the LTC is $-jb_{s,i}$. We assume the loads are inductive so that $b_{s,i} > 0$ for all $i \in \mathcal{V}_L$. When $r_i$ is fixed, load bus $i$ is equivalent to a constant admittance bus with admittance $-jb_i$ where $b_i = b_{s,i} / r_i^2$. We adopt the continuous-time dynamics approximation in \cite[Sect. 4.4]{van2007voltage} and model LTC dynamic at load bus $i$ is
\begin{align} \label{eq:dynbusi}
    \dot{r}_i = \frac{1}{T_i} (V_{s,i} - V_{0,i}).
\end{align}
On the other hand, generator buses are modeled as constant voltage sources with fixed $V_i$ for all $i \in \mathcal{V}_G$.

Since transmission systems have negligible $r/x$ ratio, we assume the network is lossless so that line $(i,k)$ has admittance $y_{ik} = -jb_{ik}$ where the line susceptance $b_{ik}$ is positive. Let $b_{ik} = 0$ for $(i,k) \notin \mathcal{E}$. The bus admittance matrix is defined as $-j\bg{B}$, where the susceptance matrix $\bg{B} \in \mathbb{R}^{(n+m)\times (n+m)}$ is
\begin{align}
    B_{ik} = 
    \begin{cases}
        -b_{ik}, & i \ne k  \\
        \sum_{k=1}^{n+m} b_{ik}, & i=k \text{ and } i > n \\
        b_i + \sum_{k=1}^{n+m} b_{ik}, & i=k \text{ and } i \le n
    \end{cases}
\end{align}

If we partition the susceptance matrix based on load and generator buses as
\begin{align}
    \bg{B} = \begin{bmatrix} \bg{B}_{LL} & \bg{B}_{LG} \\ \bg{B}_{GL} & \bg{B}_{GG} \end{bmatrix},
\end{align}
then based on Ohm's and Kirchhoff's law, the load voltages are determined by
\begin{align} \label{eq:h}
    \bg{B}_{LL} \bg{V}_L = \bg{h},
\end{align}
where $\bg{h} = -\bg{B}_{LG}\bg{V}_G \ge \mathbbold{0}$; $\bg{V}_L$ collects the load voltages; and $\bg{V}_G$ collects the generator voltages. Note that $\bg{h}$ is a constant vector while the diagonal elements of $\bg{B}_{LL}$ depend on $\bg{r}$ (the vector of tap ratios). By definition, $-\bg{B}_{LL}$ is an irreducible Hurwitz M-matrix, so $\bg{Z} := \bg{B}_{LL}^{-1} > 0$, i.e., $\bg{Z}$ is element-wise positive.

Given tap ratios, we can solve for the voltages at load bus $i$ as
\begin{align} \label{eq:V=zh}
    V_i(\bg{r}) = \bg{z}_i^\top \bg{h}, \quad V_{s,i}(\bg{r}) = \frac{V_i}{r_i} = \frac{\bg{z}_i^\top \bg{h}}{r_i},
\end{align}
where $\bg{z}_i^\top$ is the $i$th row of the impedance matrix $\bg{Z}$. 

In this paper, we study the following dynamical system of LTCs:
\begin{align} \label{eq:dynamics}
    \dot{r}_i = \frac{1}{T_i} (V_{s,i}(\bg{r}) - V_{0,i}), \quad \forall i \in \mathcal{V}_L.
\end{align}

Let the set of positive equilibria of \eqref{eq:dynamics} be $\mathcal{M}$, which are solutions to the system of algebraic equations $\dot{r}_i = 0, \forall i \in \mathcal{V}_L$:
\begin{align} \label{eq:M}
    \mathcal{M} = \left\{ \bg{r} \in \mathbb{R}^n_{>0} :\; \dot{r}_i(\bg{r}) = 0, \; \forall i \in \mathcal{V}_L \right\}.
\end{align}

\begin{defi}
    Let $\phi(t;\bg{r})$ be the solution of \eqref{eq:dynamics} that starts at initial state $\bg{r}$ at time $t = 0$ and let $\bg{r}^*$ be an equilibrium that is asymptotically stable. The ROA of $\bg{r}^*$ is defined as a positively invariant set under \eqref{eq:dynamics} such that $\lim_{t \to \infty} \phi(t; \bg{r}) = \bg{r}^*$.
\end{defi}

\subsection{Problem Statement}

We are interested in stability monitoring and control of a network hosting multiple interacting LTCs. Specifically, we address the following two problems in the paper:
\begin{itemize}
    \item For a given network with fixed secondary voltage set-points and load admittance, develop a computationally-efficient characterization of the ROA of the stable equilibrium of \eqref{eq:dynamics}.
    
    \item For a given network with fixed secondary voltage set-points and tap position, develop an efficient algorithm to determine the minimum secondary support (in terms of reduction in load admittance $\bg{b}_s$) such that the tap position lies in the ROA of the stable equilibrium of \eqref{eq:dynamics}.
\end{itemize}
The two problems are addressed in Sections \ref{sect:ROA} and \ref{sect:opt}, respectively.

\section{Characterization of ROA} \label{sect:ROA}



We first note that there is a one-to-one correspondence between equilibria of \eqref{eq:dynamics} and power flow solutions of a corresponding set of power flow equations:
\begin{pro}
    There is a one-to-one correspondence between elements in the set $\mathcal{M}$ defined in \eqref{eq:M} and power flow solutions of a corresponding set of power flow equations.
\end{pro}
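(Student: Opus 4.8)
The plan is to make the equilibrium condition explicit and recognize it as a constant reactive power flow. Setting $\dot{r}_i = 0$ in \eqref{eq:dynamics} gives $V_{s,i}(\bg{r}) = V_{0,i}$, equivalently $V_i(\bg{r}) = r_i V_{0,i}$ for every $i \in \mathcal{V}_L$. First I would record the associated power flow equations. Write $\bg{B}_{LL} = \bg{B}_{LL}^{0} + [\bg{b}]$, where $\bg{B}_{LL}^{0}$ is the load block of the lines-only network susceptance matrix and $b_i = b_{s,i}/r_i^2$. At a load bus the reactive power drawn by the equivalent shunt is $b_i V_i^2$, so \eqref{eq:h} is nothing but the reactive-power balance $V_i(\bg{B}_{LL}^{0}\bg{V}_L + \bg{B}_{LG}\bg{V}_G)_i + b_i V_i^2 = 0$. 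The candidate power flow equations are then the constant-reactive-power load equations
\begin{align*}
    V_i\,(\bg{B}_{LL}^{0}\bg{V}_L + \bg{B}_{LG}\bg{V}_G)_i + Q_i = 0, \quad i \in \mathcal{V}_L,
\end{align*}
with the fixed injections $Q_i := b_{s,i} V_{0,i}^2$ and the generator voltages $\bg{V}_G$ held at their constant values.

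Next I would exhibit the bijection. In the forward direction, I send $\bg{r} \in \mathcal{M}$ to the load voltage vector $\bg{V}_L(\bg{r}) = \bg{Z}\bg{h}$ it induces through \eqref{eq:V=zh}; since $\bg{Z} > 0$ and $\bg{h} \ge \mathbbold{0}$ is nonzero, this vector is strictly positive. Using the equilibrium identity $V_i = r_i V_{0,i}$, the shunt term becomes $b_i V_i^2 = (b_{s,i}/r_i^2)(r_i V_{0,i})^2 = b_{s,i}V_{0,i}^2 = Q_i$, so $\bg{V}_L(\bg{r})$ satisfies the constant-power equations above and is therefore a power flow solution. In the reverse direction, given a positive power flow solution $\bg{V}_L$ I set $r_i := V_i/V_{0,i} > 0$. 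The key algebraic step is to check that this $\bg{r}$ reproduces $\bg{V}_L$: substituting $b_i = b_{s,i}V_{0,i}^2/V_i^2$ turns the shunt term $b_i V_i$ into $Q_i/V_i$, so the load-bus rows of $\bg{B}_{LL}(\bg{r})\bg{V}_L - \bg{h}$ collapse (after multiplication by $V_i > 0$) to exactly the power flow residuals, which vanish by assumption. Since $\bg{B}_{LL}(\bg{r})$ is invertible, $\bg{V}_L$ is then the unique load voltage at tap ratio $\bg{r}$, whence $V_{s,i} = V_i/r_i = V_{0,i}$ and $\bg{r} \in \mathcal{M}$.

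Finally I would confirm the two maps are mutual inverses: injectivity is immediate because the equilibrium relation $r_i = V_i(\bg{r})/V_{0,i}$ recovers $\bg{r}$ from its voltage image, and the two constructions compose to the identity in both orders.

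I expect the main obstacle to be the reverse (surjectivity) direction: one must argue that the tap ratios reconstructed from a constant-power solution are genuinely consistent, i.e., that the nonlinear fixed-point coupling between $\bg{r}$ and $\bg{V}_L(\bg{r})$ closes. This hinges entirely on the invertibility of $\bg{B}_{LL}(\bg{r})$, guaranteed by its M-matrix structure, so that matching the load-bus balance equations forces the full voltage vector to coincide; the remainder is the routine substitution converting a shunt-admittance balance into a constant-power balance and back.
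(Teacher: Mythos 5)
Your proof is correct and takes essentially the same approach as the paper: both identify equilibria with solutions of the constant-reactive-power flow equations $[\bg{V}_L](\tilde{\bg{B}}_{LL}\bg{V}_L + \bg{B}_{LG}\bg{V}_G) = -\bg{Q}_L^*$ with $Q_i^* = b_{s,i}V_{0,i}^2$, via the correspondence $\bg{V}_L = [\bg{r}]\bg{V}_0$. The paper states this equivalence in one line, whereas you spell out both directions and the inverse check; your added detail (in particular using invertibility of $\bg{B}_{LL}(\bg{r})$ to close the reverse direction) is consistent with what the paper leaves implicit.
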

\begin{proof}
    The point $\bg{r}^* \in \mathbb{R}_+^{n}$ is an equilibrium of \eqref{eq:dynamics} if and only if the reactive power demand is $Q_i^* = V_{0,i}^2b_{s,i}$ for all $i \in \mathcal{V}_L$. Said differently, $\bg{r}^*$ is an equilibrium of \eqref{eq:dynamics} if and only if $\bg{V}_L^* = [\bg{r}^*]\bg{V}_{0,L}$ is a power flow solution to the following power flow equations:
    \begin{align} \label{eq:pf}
        [\bg{V}_L] \left( \tilde{\bg{B}}_{LL}\bg{V}_L + \bg{B}_{LG}\bg{V}_G \right) = -\bg{Q}_L^*,
    \end{align}
    where $\tilde{\bg{B}}$ is the load susceptance submatrix $\bg{B}_{LL}$ in which all $b_i = 0$ (i.e., the load susceptance is absent in the susceptance matrix). 
\end{proof}


We make the following assumption on the set $\mathcal{M}$:
\begin{assum} \label{assum:isolated}
    The set $\mathcal{M}$ is a discrete set. 
\end{assum}
The assumption implies the set of power flow equations corresponding to the algebraic equations describing $\mathcal{M}$ are generic so that they do not share common components. B\'ezout's theorem then ensures that the number of equilibria are finite and they are isolated, i.e., every equilibrium is unique in a sufficiently small neighborhood. This is a very mild technical assumption which holds almost always. Moreover, it has been shown in \cite{baillieul1982geometric} that for a more simplified power flow model where bus voltages are fixed, the power flow solution set is composed of finite number of isolated points with measure one on the set of system parameter.

We define the set of tap positions whose corresponding secondary voltages are higher than their set-points by $\mathcal{P}$:
\begin{defi}
    The set $\mathcal{P}$ is defined as
    \begin{align} \label{eq:P}
        \mathcal{P} = \left\{ \bg{r} \in \mathbb{R}^n_{>0} :\; \dot{r}_i(\bg{r}) \ge 0, \; \forall i \in \mathcal{V}_L \right\}.
    \end{align}
\end{defi}
Note that $\mathcal{P}$ contains all the equilibria, that is, $\mathcal{M} \subseteq \mathcal{P}$.

Some known results regarding the dynamical system \eqref{eq:dynamics} are presented in Appendix \ref{app:known}. The theorems certify the existence of a maximum equilibrium $\bg{\alpha} \in \mathbb{R}^n_{> 0}$ such that $\bg{\alpha} \ge \bg{r}^*$ for all $\bg{r}^* \in \mathcal{M}$. In addition, $\bg{\alpha}$ is asymptotically stable as long as the Jacobian of \eqref{eq:dynamics} is nonsingular at $\bg{\alpha}$. The same result for power flow equations (i.e., the existence of a stable high-voltage power flow solution) has been obtained in recent papers \cite{dvijotham2017high, simpson2016voltage}, along with algorithm that provably finds the solution.

We are interested in explicitly characterizing the ROA of $\bg{\alpha}$. Our characterization improves upon the existing one in \cite{liu1989analysis}, which is slightly paraphrased as below:
\begin{thm}[\hspace{1sp}{\cite[Prop. 3]{liu1989analysis}}]
    \label{thm:liu}
    For any $\ubar{\bg{r}} \in \mathbb{R}^n_{>0}$,  the set $\mathcal{A}(\ubar{\bg{r}}) := \{ \bg{r}: \bg{r} \ge \ubar{\bg{r}} \}$ is an ROA of $\bg{\alpha}$ if: (i)  $\bg{V}_s(\ubar{\bg{r}}) \ge \bg{V}_0$ and (ii) $\bg{\alpha}$ is the only equilibrium in $\mathcal{A}(\ubar{\bg{r}})$. 
\end{thm}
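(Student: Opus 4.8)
The plan is to recognize the dynamics \eqref{eq:dynamics} as a \emph{cooperative} (monotone) system on $\mathbb{R}^n_{>0}$ and to exploit the order-preserving property of its flow by squeezing an arbitrary trajectory between two monotone ones. First I would establish cooperativity, i.e.\ that the off-diagonal Jacobian entries of \eqref{eq:dynamics} are nonnegative. Differentiating $\bg{V}_L = \bg{Z}\bg{h}$ with $\bg{Z} = \bg{B}_{LL}^{-1}$ and noting that only the load-susceptance entry $b_k = b_{s,k}/r_k^2$ in position $(k,k)$ depends on $r_k$, so that $\partial \bg{B}_{LL}/\partial r_k = -(2b_{s,k}/r_k^3)\bg{e}_k\bg{e}_k^\top$ (with $\bg{e}_k$ the $k$th canonical basis vector), one obtains $\partial V_i/\partial r_k = (2b_{s,k}/r_k^3)\,V_k Z_{ik}$. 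Hence for $k\ne i$,
\begin{align}
  \frac{\partial \dot r_i}{\partial r_k} = \frac{1}{T_i r_i}\frac{\partial V_i}{\partial r_k} = \frac{2 b_{s,k}\, V_k Z_{ik}}{T_i r_i r_k^3} > 0,
\end{align}
since $\bg{Z} > 0$ and $\bg{V}_L > \mathbbold{0}$ on all of $\mathbb{R}^n_{>0}$; the sign of the diagonal entries is irrelevant for the Metzler property.

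Next I would invoke the standard consequences of cooperativity. By Kamke's theorem the flow is order-preserving: $\bg{r}^1 \le \bg{r}^2$ implies $\phi(t;\bg{r}^1) \le \phi(t;\bg{r}^2)$ for all $t\ge 0$. Moreover, writing the variational equation $\dot{\bg{y}} = \bg{J}\,\bg{y}$ for $\bg{y}(t) := \dot\phi(t;\bg{r})$, where $\bg{J}$ is the (Metzler) Jacobian of \eqref{eq:dynamics}, shows that the nonnegative orthant is invariant for $\bg{y}$: if the field points up at the start ($\dot{\bg{r}} \ge \mathbbold{0}$, i.e.\ $\bg{r}\in\mathcal{P}$) the trajectory is monotonically nondecreasing in $t$, and symmetrically, if it points down the trajectory is nonincreasing. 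Since every trajectory I consider is bounded below by $\ubar{\bg{r}} > \mathbbold{0}$ it remains in $\mathbb{R}^n_{>0}$, where these facts apply.

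I would then squeeze an arbitrary trajectory between two monotone ones. For the \emph{lower} one, start at $\ubar{\bg{r}}$: condition (i) gives $\ubar{\bg{r}}\in\mathcal{P}$, so $\phi(t;\ubar{\bg{r}})$ is nondecreasing; condition (ii) forces $\bg{\alpha}\ge\ubar{\bg{r}}$ (as $\bg{\alpha}\in\mathcal{A}(\ubar{\bg{r}})$), so order preservation against the equilibrium $\bg{\alpha}$ yields $\phi(t;\ubar{\bg{r}})\le\bg{\alpha}$; a bounded nondecreasing trajectory converges, and its limit is an equilibrium in $\mathcal{A}(\ubar{\bg{r}})$, hence equal to $\bg{\alpha}$ by (ii). For the \emph{upper} one, given any $\bg{r}_0\ge\ubar{\bg{r}}$ I would take $\bar{\bg{r}} = c\mathbbold{1}\ge\bg{r}_0$ with $c$ large; since $V_{s,i}(c\mathbbold{1}) = V_i(c\mathbbold{1})/c \to 0 < V_{0,i}$ as $c\to\infty$ (the load voltages stay bounded because $\bg{B}_{LL}\to\tilde{\bg{B}}_{LL}$ remains a nonsingular M-matrix as $b_i\to 0$), the field points down at $\bar{\bg{r}}$, so $\phi(t;\bar{\bg{r}})$ is nonincreasing, bounded below by $\phi(t;\ubar{\bg{r}})\ge\ubar{\bg{r}}$, and likewise converges to the unique equilibrium $\bg{\alpha}$ in $\mathcal{A}(\ubar{\bg{r}})$. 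Finally $\ubar{\bg{r}}\le\bg{r}_0\le\bar{\bg{r}}$ and order preservation give $\phi(t;\ubar{\bg{r}})\le\phi(t;\bg{r}_0)\le\phi(t;\bar{\bg{r}})$; both bounds tend to $\bg{\alpha}$, so $\phi(t;\bg{r}_0)\to\bg{\alpha}$, while $\phi(t;\bg{r}_0)\ge\phi(t;\ubar{\bg{r}})\ge\ubar{\bg{r}}$ delivers positive invariance of $\mathcal{A}(\ubar{\bg{r}})$.

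The main obstacle is the upper bound: one must certify that a dominating point $\bar{\bg{r}}$ with $\dot{\bg{r}}(\bar{\bg{r}})\le\mathbbold{0}$ exists, which rests on the asymptotics $V_{s,i}\to 0$ together with $\tilde{\bg{B}}_{LL}$ being a nonsingular (grounded) M-matrix so that $\bg{V}_L$ stays bounded as $\bg{r}\to\infty$. Everything else—cooperativity and the monotone-convergence step—is routine once the Metzler sign pattern is in hand, and condition (ii) is precisely what guarantees that both squeezing trajectories land on $\bg{\alpha}$ rather than on a spurious equilibrium.
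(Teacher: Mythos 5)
The paper does not actually prove Theorem~\ref{thm:liu}: it is imported verbatim from \cite[Prop.~3]{liu1989analysis}, with the supporting machinery (invariance of $\mathcal{P}$, existence of the maximal element $\bg{\alpha}$) only quoted in Appendix~\ref{app:known}. Your argument is therefore a self-contained reconstruction rather than a variant of a proof in the text, and it is correct. The computation $\partial V_i/\partial r_k = (2b_{s,k}/r_k^3)V_k Z_{ik}$ is right (only the $(k,k)$ entry of $\bg{B}_{LL}$ depends on $r_k$, and $\bg{Z}>0$ because $-\bg{B}_{LL}$ is an irreducible Hurwitz M-matrix), so the system is cooperative on $\mathbb{R}^n_{>0}$; Kamke order preservation plus the variational-equation argument then gives exactly the invariance of $\mathcal{P}$ (the paper's Theorem~\ref{thm:invariant}) and its mirror image for the descending set, and the squeeze between the nondecreasing trajectory from $\ubar{\bg{r}}$ and the nonincreasing trajectory from $c\mathbbold{1}$ delivers both convergence to $\bg{\alpha}$ and positive invariance of $\mathcal{A}(\ubar{\bg{r}})$, with condition (ii) pinning both limits to $\bg{\alpha}$. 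This is essentially the same order-theoretic mechanism underlying Liu's original proof, but your explicit construction of the dominating initial condition $\bar{\bg{r}}=c\mathbbold{1}$ makes the upper bound concrete; the one step that deserves a sentence more care is the claim that $\bg{V}_L$ stays bounded as $c\to\infty$, which requires $\tilde{\bg{B}}_{LL}$ to be nonsingular --- this holds here because the graph is connected with at least one generator (so the grounded submatrix is a nonsingular M-matrix), but it is an assumption on the network, not a free fact. Boundedness of the sandwiched trajectories away from $\partial\mathbb{R}^n_{>0}$ also gives global existence of solutions, which you implicitly use and could state explicitly.
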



The ROA characterization provided in Theorem \ref{thm:liu} is implicit and the main obstacle to directly apply the theorem lies in certifying the non-existence of equilibria other than $\bg{\alpha}$ in $\mathcal{A}(\ubar{\bg{r}})$. While computational algebraic geometry approaches exist to locate all power flow solutions (which, as we have noted, is equivalent to locating all equilibria of \eqref{eq:dynamics}), they are computationally intensive and are not scalable to systems of realistic size \cite{mehta2016numerical}. However, the following result shows no extra effort is needed to check equilibrium uniqueness: 
\begin{lem} \label{thm:uniqueeq}
   There is a unique equilibrium of \eqref{eq:dynamics} in $\mathcal{A}(\bg{r})$ (other than possibly $\bg{r}$ itself) for any $\bg{r} \in \mathcal{P} \setminus \{\bg{\alpha}\}$.
\end{lem}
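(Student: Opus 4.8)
The plan is to exploit the fact that \eqref{eq:dynamics} is a \emph{cooperative} (monotone) dynamical system and that $\mathcal{P}$ is a compact, forward-invariant set on which every trajectory is nondecreasing. First I would compute the Jacobian. Differentiating $\bg{B}_{LL}(\bg{r})\bg{V}_L=\bg{h}$ and using $\bg{Z}=\bg{B}_{LL}^{-1}>0$ together with $\bg{V}_L>0$ gives $\partial V_i/\partial r_k = 2b_{s,k}r_k^{-3}V_k\,Z_{ik}>0$, so for $k\neq i$ one has $\partial V_{s,i}/\partial r_k = r_i^{-1}\,\partial V_i/\partial r_k>0$. Hence the Jacobian $\bg{J}(\bg{r})=\diag(1/T_i)\,\partial\bg{V}_s/\partial\bg{r}$ has strictly positive off-diagonal entries everywhere: it is an irreducible Metzler matrix. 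By the standard comparison (Kamke) theorem the flow of \eqref{eq:dynamics} is order-preserving, and since each $\bg{J}$ is Metzler the region $\mathcal{P}=\{\bg{r}:\dot{\bg{r}}\ge\mathbbold{0}\}$ is forward invariant (the variational system for $\bg{g}=\dot{\bg{r}}$ keeps $\bg{g}\ge\mathbbold{0}$).

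Next I would record two structural facts. Because $V_{s,i}(\bg{r})\to 0$ as $r_i\to 0$ or $r_i\to\infty$, the set $\mathcal{P}$ is compact. Consequently, for $\bg{r}\in\mathcal{P}$ the trajectory $\phi(t;\bg{r})$ is nondecreasing (as $\dot{\bg{r}}(0)\ge\mathbbold{0}$ and $\mathcal{P}$ is invariant) and bounded, hence converges to an equilibrium $\bg{r}^\dagger\ge\bg{r}$. Since $\bg{\alpha}$ is the maximal equilibrium, $\bg{r}\le\bg{r}^\dagger\le\bg{\alpha}$; in particular $\bg{\alpha}\in\mathcal{A}(\bg{r})$, so at least one equilibrium always lies in $\mathcal{A}(\bg{r})$, namely $\bg{\alpha}$.

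For uniqueness, suppose an equilibrium $\bg{r}^*\in\mathcal{A}(\bg{r})$ satisfies $\bg{r}^*\neq\bg{\alpha}$ and $\bg{r}^*\neq\bg{r}$; then $\bg{r}\le\bg{r}^*\lneq\bg{\alpha}$. The key spectral input is that a nonconstant nondecreasing trajectory cannot converge to an \emph{unstable} equilibrium: if $\bg{s}=\lim\phi(t;\cdot)$ and $\bg{J}(\bg{s})$ (irreducible Metzler) has positive Perron eigenvalue $\lambda$ with positive left eigenvector $\bg{u}\gg\mathbbold{0}$, then $\tfrac{d}{dt}\,\bg{u}^\top(\bg{s}-\phi)\approx\lambda\,\bg{u}^\top(\bg{s}-\phi)>0$ near $\bg{s}$, contradicting $\phi\uparrow\bg{s}$; so under Assumption \ref{assum:isolated} such limits are asymptotically stable. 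Now choose $\bg{r}'\in\mathcal{P}$ with $\bg{r}'\lneq\bg{r}^*$ that is not an equilibrium: take $\bg{r}'=\bg{r}$ if $\bg{r}\notin\mathcal{M}$, and otherwise $\bg{r}'=\bg{r}+\epsilon\bg{v}$ along the positive Perron eigenvector of the (necessarily unstable) equilibrium $\bg{r}$, which lies in $\mathcal{P}$ for small $\epsilon$. By order preservation $\phi(t;\bg{r}')\le\phi(t;\bg{r}^*)=\bg{r}^*$ for all $t$, so its limit is an asymptotically stable equilibrium $\bg{s}\le\bg{r}^*\lneq\bg{\alpha}$, i.e.\ a stable equilibrium distinct from $\bg{\alpha}$. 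This contradicts the uniqueness of the stable equilibrium $\bg{\alpha}$, so no such $\bg{r}^*$ exists and $\bg{\alpha}$ is the only equilibrium in $\mathcal{A}(\bg{r})$ other than possibly $\bg{r}$ itself.

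The hard part will be this last step. The purely order-theoretic machinery only yields convergence to \emph{some} equilibrium below $\bg{\alpha}$; ruling out an intermediate stable equilibrium is exactly where the monotone structure must be combined with the Perron--Frobenius characterization of stability for the irreducible Metzler Jacobian and with the uniqueness of the stable equilibrium. Establishing that every non-maximal equilibrium is unstable—equivalently, that it is a minimal element of $\mathcal{P}$, so that no point of $\mathcal{P}$ lies strictly below it—is the crux and the only place where system-specific information beyond monotonicity is required.
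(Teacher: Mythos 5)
Your overall strategy (monotone/cooperative systems theory) is genuinely different from the paper's, which instead rewrites the equilibrium condition as a fixed point of the map $f_i(\bg{r}) = V_{0,i}^{-1}\bigl(E_i - \sum_k Z_{ik}V_{0,k}b_k/r_k\bigr)$, shows $f$ maps a closed box into its interior whenever the box's lower corner lies in $\mathcal{P}$, and then obtains \emph{uniqueness} of the fixed point in that box by complexifying $f$ and invoking a holomorphic fixed-point theorem (an Earle--Hamilton-type result, Lemmas 4--5 in the appendix). That analytic step is precisely what delivers the "no intermediate equilibrium" conclusion, and it is the ingredient your argument is missing.

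The concrete gap in your proposal is circularity at the crux. To rule out an equilibrium $\bg{r}^*$ with $\bg{r}\le\bg{r}^*\lneq\bg{\alpha}$, you construct a nondecreasing trajectory trapped below $\bg{r}^*$, argue its limit must be an asymptotically stable equilibrium $\bg{s}\le\bg{r}^*$, and then appeal to "the uniqueness of the stable equilibrium $\bg{\alpha}$" for a contradiction. But in this paper that uniqueness statement is Theorem \ref{thm:eq_unstable}, which is \emph{proved from} Lemma \ref{thm:uniqueeq}; the prior literature (see the Remark after Theorem \ref{thm:eq_unstable}) explicitly does not preclude other stable equilibria, so you cannot import it. Your closing paragraph essentially concedes this: the claim that every non-maximal equilibrium is a minimal element of $\mathcal{P}$ is exactly the content to be proved, and monotonicity plus Assumption \ref{assum:isolated} alone do not yield it (a cooperative system can perfectly well have several stable equilibria, e.g.\ a bistable scalar ODE). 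Two subsidiary steps are also shaky: the assertion that a nondecreasing trajectory cannot converge to an unstable equilibrium needs the Perron eigenvalue at the limit to be strictly positive, which is not guaranteed by isolatedness; and when $\bg{r}\in\mathcal{M}$ your perturbation $\bg{r}'=\bg{r}+\epsilon\bg{v}$ with $\bg{v}\gg\mathbbold{0}$ need not satisfy $\bg{r}'\le\bg{r}^*$ if $\bg{r}$ and $\bg{r}^*$ agree in some coordinate. To repair the proof you would need an independent uniqueness mechanism on the order interval $[\ubar{\bg{r}},\bg{\alpha}]$ --- which is what the paper's holomorphic fixed-point argument supplies.
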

\begin{proof}
    See Appendix \ref{app:uniqueeq}.
\end{proof}

By combining Theorem \ref{thm:liu} and Lemma \ref{thm:uniqueeq}, we obtain the following characterization of ROA which is computationally attractive:
\begin{thm} \label{thm:ROA}
    The set $\mathcal{A}(\ubar{\bg{r}}) := \{ \bg{r}: \bg{r} \ge \ubar{\bg{r}} \}$ is an ROA of $\bg{\alpha}$ for any $\ubar{\bg{r}} \in \mathcal{P}$.
\end{thm}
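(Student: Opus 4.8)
The plan is to obtain Theorem \ref{thm:ROA} as an immediate consequence of Theorem \ref{thm:liu} and Lemma \ref{thm:uniqueeq}, by verifying that an arbitrary $\ubar{\bg{r}} \in \mathcal{P}$ meets both hypotheses (i) and (ii) of Theorem \ref{thm:liu}. Hypothesis (i) is essentially free: since $\dot{r}_i(\bg{r}) = \frac{1}{T_i}(V_{s,i}(\bg{r}) - V_{0,i})$, the condition $\dot{r}_i(\bg{r}) \ge 0$ is equivalent to $V_{s,i}(\bg{r}) \ge V_{0,i}$, so the defining inequalities of $\mathcal{P}$ in \eqref{eq:P} are exactly $\bg{V}_s(\ubar{\bg{r}}) \ge \bg{V}_0$ together with $\ubar{\bg{r}} \in \mathbb{R}^n_{>0}$. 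Thus membership $\ubar{\bg{r}} \in \mathcal{P}$ directly supplies hypothesis (i).

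The substantive work is hypothesis (ii), namely that $\bg{\alpha}$ is the \emph{only} equilibrium in $\mathcal{A}(\ubar{\bg{r}})$. I would split on whether $\ubar{\bg{r}} = \bg{\alpha}$. If $\ubar{\bg{r}} = \bg{\alpha}$, then because $\bg{\alpha}$ is the maximal equilibrium (the known results in the Appendix give $\bg{\alpha} \ge \bg{r}^*$ for every $\bg{r}^* \in \mathcal{M}$), any equilibrium $\bg{r}^* \in \mathcal{A}(\bg{\alpha})$ satisfies both $\bg{r}^* \ge \bg{\alpha}$ and $\bg{r}^* \le \bg{\alpha}$, forcing $\bg{r}^* = \bg{\alpha}$; hypothesis (ii) follows. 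If instead $\ubar{\bg{r}} \in \mathcal{P} \setminus \{\bg{\alpha}\}$, Lemma \ref{thm:uniqueeq} applies and yields a unique equilibrium in $\mathcal{A}(\ubar{\bg{r}})$ apart from possibly $\ubar{\bg{r}}$ itself. To finish I must (a) identify that unique equilibrium as $\bg{\alpha}$, which requires $\bg{\alpha} \in \mathcal{A}(\ubar{\bg{r}})$, i.e. $\bg{\alpha} \ge \ubar{\bg{r}}$, and (b) dispose of the clause ``other than possibly $\ubar{\bg{r}}$ itself.''

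For step (a) I would exploit the monotone (cooperative) structure of \eqref{eq:dynamics}: a short computation from $\bg{Z} = \bg{B}_{LL}^{-1} > 0$ shows $\partial \dot{r}_i / \partial r_k > 0$ for $k \ne i$, so the off-diagonal Jacobian entries are positive. Consequently $\mathcal{P} = \{ \bg{r} : \dot{\bg{r}}(\bg{r}) \ge \mathbbold{0} \}$ is positively invariant and the trajectory $\phi(t;\ubar{\bg{r}})$ is componentwise nondecreasing, converging to some equilibrium $\bg{r}^\infty \ge \ubar{\bg{r}}$; maximality of $\bg{\alpha}$ then gives $\bg{\alpha} \ge \bg{r}^\infty \ge \ubar{\bg{r}}$, so $\bg{\alpha} \in \mathcal{A}(\ubar{\bg{r}})$ and is therefore the equilibrium delivered by Lemma \ref{thm:uniqueeq}. (This also confirms the theorem is even well-posed, since an ROA of $\bg{\alpha}$ must contain $\bg{\alpha}$.)

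I expect step (b) to be the main obstacle. The phrase ``other than possibly $\ubar{\bg{r}}$ itself'' leaves open the degenerate case in which $\ubar{\bg{r}}$ is itself a non-maximal equilibrium; since $\mathcal{M} \subseteq \mathcal{P}$, such points genuinely lie in $\mathcal{P}$, and then $\mathcal{A}(\ubar{\bg{r}})$ would contain both $\ubar{\bg{r}}$ and $\bg{\alpha}$, so hypothesis (ii) of Theorem \ref{thm:liu} would fail verbatim. I would close this gap by arguing that these finitely many equilibria (isolated by Assumption \ref{assum:isolated}) are the only exceptional starting points and are repelling under the monotone flow, so that every trajectory in $\mathcal{A}(\ubar{\bg{r}})$ other than the stationary one at $\ubar{\bg{r}}$ still converges to $\bg{\alpha}$; reconciling this with the stated notion of ROA is where I would spend the most care.
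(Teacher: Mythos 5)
Your proposal follows exactly the paper's route: the paper's entire proof of this theorem is the single sentence that it follows ``by combining Theorem~\ref{thm:liu} and Lemma~\ref{thm:uniqueeq},'' which is precisely your plan of verifying hypotheses (i) and (ii). Two remarks: your step (a) is available for free from Theorem~\ref{thm:alpha} in the appendix, which already states $\bg{\alpha} \ge \bg{r}$ for all $\bg{r} \in \mathcal{P}$, so no monotone-flow argument is needed; and the wrinkle you flag in step (b) --- that when $\ubar{\bg{r}}$ is itself a non-maximal equilibrium, hypothesis (ii) of Theorem~\ref{thm:liu} fails verbatim and the stationary trajectory at $\ubar{\bg{r}}$ never converges to $\bg{\alpha}$ --- is a genuine gap that the paper silently ignores, so on this point you are being more careful than the source.
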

The theorem implies that the only information needed to characterize an ROA is the availability of a point in $\mathcal{P}$. We address the problem of finding such a point in the next section.

The next theorem shows that among all possible equilibria, it suffices to study $\bg{\alpha}$ alone, since it is the only stable equilibrium:
\begin{thm} \label{thm:eq_unstable}
   All equilibria of \eqref{eq:dynamics} other than $\bg{\alpha}$ are unstable.
\end{thm}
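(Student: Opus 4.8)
The plan is to show that every equilibrium $\bg{r}^* \in \mathcal{M} \setminus \{\bg{\alpha}\}$ is Lyapunov unstable by exhibiting, arbitrarily close to $\bg{r}^*$, initial conditions whose trajectories are forced to converge to the distant point $\bg{\alpha}$ through the region-of-attraction result of Theorem~\ref{thm:ROA}. The entire argument hinges on producing non-equilibrium points of $\mathcal{P}$ that accumulate at $\bg{r}^*$; once even a single nearby point of $\mathcal{P}\setminus\mathcal{M}$ is available, Theorem~\ref{thm:ROA} does the rest.

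First I would fix $\bg{r}^* \in \mathcal{M} \setminus \{\bg{\alpha}\}$ and consider the Jacobian $\bg{J} := D\dot{\bg{r}}(\bg{r}^*)$ of the vector field in \eqref{eq:dynamics} at $\bg{r}^*$; here $\dot{\bg{r}}$ is smooth on $\mathbb{R}^n_{>0}$ because $\bg{Z} = \bg{B}_{LL}^{-1}$ depends smoothly on $\bg{r}$ and $r_i > 0$. Under Assumption~\ref{assum:isolated} the corresponding power flow equations are generic, so the equilibria are nondegenerate and $\bg{J}$ is nonsingular. The key step is then to take the direction $\bg{\delta} := \bg{J}^{-1}\mathbbold{1}$, which satisfies $\bg{J}\bg{\delta} = \mathbbold{1} > \mathbbold{0}$. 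A first-order expansion gives, for each $i$,
\begin{align}
    \dot{r}_i(\bg{r}^* + \epsilon \bg{\delta}) = \dot{r}_i(\bg{r}^*) + \epsilon (\bg{J}\bg{\delta})_i + o(\epsilon) = \epsilon + o(\epsilon),
\end{align}
so there is $\bar{\epsilon} > 0$ such that the perturbed point $\bg{p}_\epsilon := \bg{r}^* + \epsilon \bg{\delta}$ obeys $\bg{p}_\epsilon \in \mathbb{R}^n_{>0}$ and $\dot{r}_i(\bg{p}_\epsilon) > 0$ for all $i$ whenever $0 < \epsilon \le \bar{\epsilon}$; that is, $\bg{p}_\epsilon \in \inte(\mathcal{P})$.

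Next I would invoke the facts that make these perturbations useful. Since $\mathcal{M}$ is discrete, after possibly shrinking $\bar{\epsilon}$ the only equilibrium within distance $\bar{\epsilon}\,\|\bg{\delta}\|$ of $\bg{r}^*$ is $\bg{r}^*$ itself, and because $\dot{r}_i(\bg{p}_\epsilon) > 0$ strictly, $\bg{p}_\epsilon$ is not an equilibrium; hence $\bg{p}_\epsilon \in \mathcal{P} \setminus \mathcal{M}$. Applying Theorem~\ref{thm:ROA} with $\ubar{\bg{r}} = \bg{p}_\epsilon$, the set $\mathcal{A}(\bg{p}_\epsilon)$ is an ROA of $\bg{\alpha}$, and since $\bg{p}_\epsilon \in \mathcal{A}(\bg{p}_\epsilon)$ we obtain $\lim_{t\to\infty}\phi(t;\bg{p}_\epsilon) = \bg{\alpha}$. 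The potential ambiguity that $\mathcal{A}(\ubar{\bg{r}})$ might contain a second equilibrium when $\ubar{\bg{r}}$ is itself an equilibrium does not arise here, precisely because $\bg{p}_\epsilon \notin \mathcal{M}$.

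Finally I would assemble these into Lyapunov instability. Set $R := \tfrac{1}{2}\|\bg{\alpha} - \bg{r}^*\| > 0$. Given any $\rho > 0$, choosing $\epsilon \le \bar{\epsilon}$ small enough that $\epsilon \|\bg{\delta}\| < \rho$ yields an initial condition $\bg{p}_\epsilon$ within $\rho$ of $\bg{r}^*$ whose trajectory, converging to $\bg{\alpha}$, eventually satisfies $\|\phi(t;\bg{p}_\epsilon) - \bg{r}^*\| \ge \|\bg{\alpha}-\bg{r}^*\| - R = R$; as $\rho$ was arbitrary, $\bg{r}^*$ cannot be stable. I expect the main obstacle to be the construction step, namely certifying that non-equilibrium points of $\mathcal{P}$ accumulate at $\bg{r}^*$, which is exactly where nonsingularity of $\bg{J}$ (guaranteed generically by Assumption~\ref{assum:isolated}) is indispensable; the clean choice $\bg{\delta} = \bg{J}^{-1}\mathbbold{1}$ is what turns this into a one-line verification.
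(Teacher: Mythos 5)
Your overall strategy (exhibit points of $\mathcal{P}\setminus\mathcal{M}$ accumulating at $\bg{r}^*$, then push their trajectories to a distant equilibrium) is exactly the right shape, and your handling of Theorem~\ref{thm:ROA} and the final Lyapunov argument is fine. But the construction step has a genuine gap: you derive the nearby points from the direction $\bg{\delta}=\bg{J}^{-1}\mathbbold{1}$, and you justify invertibility of $\bg{J}$ by claiming that Assumption~\ref{assum:isolated} makes the equilibria ``nondegenerate.'' That does not follow. Assumption~\ref{assum:isolated} only says $\mathcal{M}$ is discrete, and via B\'ezout that the equilibria are finite and isolated; an isolated zero of a smooth map can perfectly well have a singular Jacobian (think of $\dot{x}=x^2$ at $x=0$, or any power-flow solution of multiplicity greater than one --- B\'ezout counts these with multiplicity and does not exclude them). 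If $\bg{J}$ is singular, its range is a proper subspace that need not meet the open positive orthant, so no direction with $\bg{J}\bg{\delta}>\mathbbold{0}$ need exist and your one-line verification collapses. Note that the paper is careful on exactly this point: Jacobian nonsingularity is invoked only at $\bg{\alpha}$, and there it is stated as an explicit hypothesis (Theorem~\ref{thm:nonempty}), not derived from Assumption~\ref{assum:isolated}.

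The paper's proof sidesteps linearization entirely. It uses the connectedness of $\mathcal{P}$ (Lemma~\ref{thm:connectd}) to find, in every ball $\mathcal{B}_2(\bg{r}^*,\epsilon)$ with $\epsilon$ smaller than the distance $d$ to the nearest other equilibrium, a point $\tilde{\bg{r}}\in\mathcal{P}$ that is not an equilibrium; it then shows $\tilde{r}_i>r_i^*$ for some $i$ (otherwise $\tilde{\bg{r}}\le\bg{r}^*$ and Lemma~\ref{thm:uniqueeq} would force $\mathcal{A}(\tilde{\bg{r}})$ to contain only one equilibrium, contradicting $\bg{r}^*,\bg{\alpha}\in\mathcal{A}(\tilde{\bg{r}})$); finally, since $\mathcal{P}$ is invariant, $\dot{\bg{r}}\ge\mathbbold{0}$ on it, and it is bounded above by $\bg{\alpha}$, the trajectory through $\tilde{\bg{r}}$ increases monotonically to some equilibrium at distance at least $d$ from $\bg{r}^*$. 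If you replace your Jacobian construction with this connectedness argument (or otherwise prove nondegeneracy as an additional assumption), the rest of your write-up goes through.
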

\begin{proof}
    See Appendix \ref{app:eq_unstable}.
\end{proof}

\begin{rem}
The equilibria are classified into ``high tap ratio'' ones and others in \cite{liu1989analysis}, and it is shown therein that only ``high tap ratio'' equilibria, $\bg{\alpha}$ included, can be stable. While it makes engineering sense to focus on $\bg{\alpha}$ since it corresponds to the stable high-voltage operating point at which normal system is operated, the result in \cite{liu1989analysis} does not preclude the possibility of the existence of other stable equilibria. With the mild technical Assumption \ref{assum:isolated}, we establish that $\bg{\alpha}$ is in fact the only stable equilibrium. Therefore, we can be certain that this is the equilibrium that a nominal system would operate around.
\end{rem}

\section{Stability Monitoring and Instability Mitigation} \label{sect:opt}

In this section, we address the problems of computing ROA and stability monitoring \& control by formulating two efficient optimization problems, leveraging the analytical characterization of ROA developed in Section \ref{sect:ROA}. We show that the ROA computation problem can be formulated as a quadratically constrained linear program, whereas the stability problem of determining the minimum secondary support to restore system stability admits a safe second-order cone program (SOCP) approximation, whose approximation quality will be validated through numerical experiments in Section \ref{sect:sim}.


\subsection{Characterization of ROA}

Given network with fixed secondary voltage setpoints and load admittance, the problem of finding a tap position $\ubar{\bg{r}} \in \mathcal{P}$ to characterize the ROA of $\bg{\alpha}$ based on Theorem \ref{thm:ROA} can be formulated as follows:
\begin{subequations} \label{eq:p1:original}
\begin{align}
    \min_{\bg{V}, \bg{r}} \quad & \bg{c}^\top\bg{r} \\
    \text{s.t.} \quad & \left(\tilde{\bg{B}}_{LL} + [\bg{b}_s][\bg{r}]^{-2}\right)\bg{V} = \bg{h} \label{eq:p1:original:b} \\
    & \bg{V} \ge [\bg{r}] \bg{V}_{0} \label{eq:p1:original:c} \\
    & \bg{r} \ge \mathbbold{0}. \label{eq:p1:original:d}
\end{align}
\end{subequations}
Recall $\tilde{\bg{B}}_{LL}$ is the load susceptance matrix defined in \eqref{eq:pf}, $b_{s,i}$ is the positive load susceptance at bus $i \in \mathcal{V}_L$ and the `weighted generator voltage' vector $\bg{h}$ appeared in \eqref{eq:h}. To maximize the volume of the ROA, we design the objective function to find the minimum $\bg{r} \in \mathcal{P}$ along some direction $\bg{c} \ge \mathbbold{0}$ per Theorem \ref{thm:ROA}. Constraint \eqref{eq:p1:original:b} enforces Ohm's law and Kirchhoff's law over the network, while constraints \eqref{eq:p1:original:c} and \eqref{eq:p1:original:d} require that $\bg{r} \in \mathcal{P}$.

If we denote the optimal solution of problem \eqref{eq:p1:original} by $(\bg{V}^*(\bg{c}),\bg{r}^*(\bg{c}))$, then $\mathcal{A}(\bg{r}^*(\bg{c}))$ is an ROA. Each direction $\bg{c}$ determines a (possibly distinct) inner approximation of the true ROA, and their union therefore characterizes a maximal inner approximation of the ROA:
\begin{align}
    \mathcal{A}_{\cup} := \bigcup_{\substack{\bg{c} \ge \mathbbold{0} \\ \bg{c}^\top\mathbbold{1} = 1}} \mathcal{A}(\bg{r}^*(\bg{c})).
\end{align}
For practical implementation considerations, a few representative cost vectors $\bg{c}$ can be chosen based on specific system characteristics.

\subsection{Online Stability Monitoring and Control} \label{sect:stability_formulation}

In this section, we formulate a second-order cone program for the problem of stability monitoring and control. To certify that the tap position $\bg{r}_0$ is in the ROA of the stable equilibrium, it suffices to ensure that $\bg{r}_0 \in \mathcal{A}(\bg{r})$ for some $\bg{r} \in \mathcal{P}$, which is equivalent to achieving a zero cost for the following problem:
\begin{subequations}\label{eq:chk:f1}
\begin{align}
    \min_{\bg{r}, \bg{V}}\quad & \| \tilde{\bg{B}}_{LL} \bg{V} + [\bg{b}_s][\bg{r}]^{-2} \bg{V} - \bg{h} \|_2^2 \label{eq:chk:cost} \\
    \text{s.t.} \quad 
    & \bg{V} \ge [\bg{r}]\bg{V}_0 \label{eq:chk:c1}\\
    & \mathbbold{0} \le \bg{r} \le \bg{r}_0. \label{eq:chk:c2}
\end{align}
\end{subequations}

The above problem can be reformulated as an SOCP, and hence, it becomes amenable to distributed implementation with convergence guarantee by the standard ADMM algorithm \cite[Sect. 5.4]{bertsekas2015convex}. To see this, we first reformulate \eqref{eq:chk:f1} as
\begin{subequations}\label{eq:reform}
\begin{align}
    \min_{\bg{r}, \bg{V}}\quad & \| \tilde{\bg{B}}_{LL} \bg{V} + [\bg{b}_s][\bg{r}]^{-2} \bg{V} - \bg{h} \|_2^2 \label{eq:reform:cost} \\
    \text{s.t.} \quad 
    & [\bg{r}]^{-2}[\bg{V}]\bg{V} \ge [\bg{V}_0]\bg{V}_0 \label{eq:reform:c1}\\
    & [\bg{r}]^{-2}\bg{V} \ge [\bg{r}_0]^{-2}\bg{V} \label{eq:reform:c2} \\
    & \bg{V} \ge \mathbbold{0}. \label{eq:reform:c3}
\end{align}
\end{subequations}
Introducing a new variable $\bg{u} := [\bg{r}]^{-2} \bg{V}$, \eqref{eq:reform} can be reformulated as the following SOCP:
\begin{subequations} \label{eq:socp}
\begin{align}
    \min_{\bg{u}, \bg{V}}\quad & \| \tilde{\bg{B}}_{LL} \bg{V} + [\bg{b}_s]\bg{u} - \bg{h} \|_2^2 \label{eq:socp:cost} \\
    \text{s.t.} \quad 
    & [\bg{u}]\bg{V} \ge [\bg{V}_0]\bg{V}_0 \label{eq:socp:c1}\\
    & \bg{u} \ge [\bg{r}_0]^{-2}\bg{V} \label{eq:socp:c2} 
    \\
    & \bg{V} \ge \mathbbold{0}. \label{eq:socp:c3}
\end{align}
\end{subequations}
Problem \eqref{eq:socp} is an SOCP since it minimizes $\ell_2$-norm over linear constraints \eqref{eq:socp:c2}--\eqref{eq:socp:c3} and constraint \eqref{eq:socp:c1}, which is SOC representable as it can be written as $\| [V_{0,i}, (u_i - V_i)/2] \|_2 \le (u_i + V_i)/2, \; \forall i \in \mathcal{V}_L$.

A related problem is to determine the corrective actions to mitigate instability when $\bg{r}_0$ does not lie in the ROA. Given network with fixed secondary voltage setpoints and tap position $\bg{r}_0$, we want to determine the minimum reduction in load admittance $\bg{d}$ such that $\bg{r}_0$ returns to the ROA of the stable equilibrium. The minimum amount of aggregate secondary reactive power support (in terms of $\ell_2$-norm of reduction in secondary load susceptance) can be determined by the following optimization problem:
\begin{subequations} \label{eq:p2:original}
\begin{align}
    \min_{\bg{V}, \bg{r}, \bg{d}} \quad & \|\bg{d}\|_2^2 \\
    \text{s.t.} \quad & \left(\tilde{\bg{B}}_{LL} + [\bg{b}_s - \bg{d}][\bg{r}]^{-2}\right)\bg{V} = \bg{h} \label{eq:p2:original:b} \\
    & [\bg{r}]^{-1}\bg{V} \ge \bg{V}_{0} \\
    & \mathbbold{0} \le \bg{r} \le \bg{r}_0 \label{eq:p2:original:d} \\
    & \mathbbold{0} \le \bg{d} \le \bg{b}_s. \label{eq:p2:original:e}
\end{align}
\end{subequations}
Similar to \eqref{eq:socp}, problem \eqref{eq:p2:original} can be reformulated as
\begin{subequations} \label{eq:p2:reform}
\begin{align}
    \min_{\bg{V}, \bg{u}, \bg{d}} \quad & \|\bg{d}\|_2^2 \\
    \text{s.t.} \quad & \tilde{\bg{B}}_{LL} \bg{V} + [\bg{b}_s]\bg{u} - [\bg{d}]\bg{u} = \bg{h} \label{eq:p2:reform:b} \\
    & \eqref{eq:socp:c1}, \eqref{eq:socp:c2}, \eqref{eq:socp:c3}, \eqref{eq:p2:original:e}. \nonumber
\end{align}
\end{subequations}
However, the above problem is still nonconvex due to the bilinear term $d_i u_i$ in \eqref{eq:p2:reform:b}. To get a safe estimate of the minimum secondary support tractably, we formulate a convex surrogate of \eqref{eq:p2:reform} by treating $[\bg{d}]\bg{u}$ in \eqref{eq:p2:reform} as a single variable to be minimized. By \eqref{eq:p2:reform:b}, minimizing $\| [\bg{d}]\bg{u} \|_2$ is the same as minimizing $\| \tilde{\bg{B}}_{LL} \bg{V} + [\bg{b}_s]\bg{u} - \bg{h} \|_2$, we therefore arrive at the following convex program: 
\begin{subequations} \label{eq:p2:convex}
\begin{align}
    \min_{\bg{u}, \bg{V}} \quad & \| \tilde{\bg{B}}_{LL} \bg{V} + [\bg{b}_s]\bg{u} - \bg{h} \|_2^2 \\
    \text{s.t.} \quad & \tilde{\bg{B}}_{LL}\bg{V} \le \bg{h} \label{eq:p2:convex:c} \\
    & \eqref{eq:socp:c1}, \eqref{eq:socp:c2}, \eqref{eq:socp:c3}. \nonumber
\end{align}
\end{subequations}
The susceptance reduction $\bg{d}^*$ can be solved for from the optimal solution $(\bg{u}^*, \bg{V}^*)$ of the above problem as $\bg{d}^* = [\bg{u}^*]^{-1}(\tilde{\bg{B}}_{LL} \bg{V}^* + [\bg{b}_s]\bg{u}^* - \bg{h})$. This is a feasible solution to the original problem \eqref{eq:p2:original} and therefore provides an upper bound on the global minimum of \eqref{eq:p2:original}. In addition, $(\bg{u}^*,\bg{V}^*)$ is feasible for \eqref{eq:p2:convex} as long as $(\bg{u}^*,\bg{V}^*,\bg{d}^*)$ is feasible for \eqref{eq:p2:reform}. Since the original problem \eqref{eq:p2:original} is always feasible for sufficiently small $\bg{r}$ when setting $\bg{d} = \bg{b}_s$, the feasibility of the convex surrogate \eqref{eq:p2:convex} can be guaranteed.

Problems \eqref{eq:socp} and \eqref{eq:p2:convex} are the same except for the additional constraint \eqref{eq:p2:convex:c} in \eqref{eq:p2:convex}. We see that \eqref{eq:socp} achieves zero optimal cost if and only if the same holds true for \eqref{eq:p2:convex}. Therefore, solving problem \eqref{eq:p2:convex} serves dual purposes: when the optimal cost is zero, the tap position $\bg{r}_0$ is certified stable; otherwise, the optimal solution determines the amount of reactive power support to steer $\bg{r}_0$ back to ROA.


\subsection{Distributed Implementation} \label{sect:admm}

Solving the optimization problem~\eqref{eq:p2:convex} in an online fashion for a large-scale system requires network-wide knowledge of the tap positions and voltages. We opt for a distributed algorithm for solving this problem due to the following two reasons. First, utilities may not be willing to share local information to a central system operator due to privacy concerns; and second, a distributed solver can better adapt to time-varying system conditions and reject disturbances. 

To facilitate the design, suppose the underlying graph $\mathcal{G}$ of the system is partitioned into $n_s$ connected induced subgraphs (agents). Let $\mathcal{N}_i$ be the bus set of the $i$th agent, $\mathcal{N}_i^a$ be the set of buses adjacent to the $i$th agent, and $\mathcal{B}$ be the set of boundary buses, that is, buses with at least one adjacent bus in a different agent. Let $\bg{x}^i = \left( \{V_j\}_{j \in \mathcal{N}_i}, \{u_j\}_{j \in \mathcal{N}_i}, \{W^{ij}\}_{j \in \mathcal{N}_i^a} \right)$ collect the optimization variables of agent $i$. In particular, $\{V_j\}_{j \in \mathcal{N}_i}$ are voltages of agent $i$, $\{u_j\}_{j \in \mathcal{N}_i}$ are scaled voltages of agent $i$, and $\{W^{ij}\}_{j \in \mathcal{N}_i^a}$ are voltages of buses adjacent  to agent $i$. We can then reformulate problem~\eqref{eq:p2:convex} as
\begin{subequations} \label{eq:admm}
\begin{align}
    \min_{\bg{x}, \bg{z}} \quad & \sum_{i=1}^{n_s} f_i(\bg{x}^i) \label{eq:admm:cost} \\
    \text{s.t.} \quad 
    & \bg{x}^i \in \mathcal{X}_i, && i = 1, \ldots, n_s \label{eq:admm:c1}\\
    & W^{ij} = z^j, V_j = z^j, &&  i = 1, \ldots, n_s, \, j \in \mathcal{N}_i^a \label{eq:admm:c2}
\end{align}
\end{subequations}
where $\bg{z}$ is the consensus variable used to enforce consensus in \eqref{eq:admm:c2}; 
\begin{multline}
    f_i(\bg{x}^i) = \\
    \sum_{j \in \mathcal{N}_i} \left( \sum_{k \in \mathcal{N}_i} \tilde{B}_{jk} V_k + \sum_{k \in \mathcal{N}_i^a} \tilde{B}_{jk} W^{ik} + b_{s,j}u_j - h_j \right)^2;
\end{multline}
and
\begin{multline}
    \mathcal{X}_i = \bigg\{ \bg{x} =  (\bg{V},\bg{u},\bg{W}):\; u_jV_j \ge V_{0,j}^2, r_{0,j}^2u_j \ge V_j, \\
    \sum_{k \in \mathcal{N}_i} \tilde{B}_{jk} V_k + \sum_{k \in \mathcal{N}_i^a} \tilde{B}_{jk} W^{ik} \le h_j, \;\; j \in \mathcal{N}_i \bigg\}.
\end{multline}

The corresponding augmented Lagrangian of problem \eqref{eq:admm} with penalty parameter $\rho$ is given by
\begin{multline}
    L_\rho(\bg{x}, \bg{z}, \bg{\lambda}, \bg{\mu}) = \sum_{i=1}^{n_s} f_i(\bg{x}^i) \\
    + \sum_{i \in \mathcal{B}} \left( \lambda^i (V_i - z^i) + \frac{\rho}{2} (V_i - z^i)^2 \right) \\
    + \sum_{i = 1}^{n_s} \sum_{j \in \mathcal{N}_i^a} \left( \mu^{ij} (W^{ij} - z^j) + \frac{\rho}{2} ( W^{ij} - z^j )^2 \right).
\end{multline}

The ADMM performs the following iterative updates:
\begin{subequations} \label{eq:update}
    \begin{align}
        \bg{x}_{k+1}^i &= \arg \min_{\bg{x}^i \in \mathcal{X}_i} \bigg\{ f_i(\bg{x}^i)
        + \sum_{j \in \mathcal{B} \cup \mathcal{N}_i} \left( \lambda_k^j V_j + \frac{\rho}{2}(V_j - z_k^j)^2 \right) \nonumber\\
        & \quad + \sum_{j \in \mathcal{N}_i^a} \left( \mu_k^{ij} W^{ij} + \frac{\rho}{2} ( W^{ij} - z_k^j )^2 \right) \bigg\}, \label{eq:update:x} \\
        z_{k+1}^i &= \arg \min \bigg\{
        \sum_{j: i\in\mathcal{N}_j^a} \left( -\mu^{ji}_k z^i + \frac{\rho}{2} (W^{ji}_{k+1} - z^i)^2 \right) \nonumber\\
        & \quad -\lambda_k^i z^i + \frac{\rho}{2}(V_{i,k+1} - z^i)^2 \bigg\}, && \hspace{-1.2in} \forall i \in \mathcal{B} \label{eq:update:z} \\
        \lambda_{k+1}^i &= \lambda_k^i + \rho \left( V_{i,k+1} - z_{k+1}^i \right), && \hspace{-1.2in} \forall i \in \mathcal{B} \label{eq:update:lambda} \\
        \mu_{k+1}^{ij} &= \mu_k^{ij} + \rho \left( W_{k+1}^{ij} - z_{k+1}^j \right), && \hspace{-1.2in} \forall i, \forall j \in \mathcal{N}_i^a \label{eq:update:mu}
    \end{align}
\end{subequations}
Note that the minimization problem \eqref{eq:update:z} admits the following closed-form solution since the objective is an unconstrained quadratic function in $z^i$:
\begin{align} \label{eq:closed-form:z}
    z_{k+1}^i = \frac{\lambda_k^i + \rho V_{i,k+1} + \sum_{j:i \in \mathcal{N}_j^a} \left( \mu_k^{ji} + \rho W_{k+1}^{ji} \right)}{\rho(1 + n_i)},
\end{align}
where $n_i$ is the number of neighbors of agent $i$.

In each iteration of the above ADMM algorithm, each agent solves one optimization problem, shares information with its neighbors, and updates multipliers. We can explicitly write out the local communication in each step
of the distributed algorithm:
\begin{enumerate}
    \item Each agent $i$ receives the multipliers $\{ \mu_k^{ij} \}_{j \in \mathcal{N}_i^a}$ and voltage estimates $\{z_k^j\}_{j \in \mathcal{N}_i^a}$ from its neighbors, solves problem \eqref{eq:update:x}, and broadcasts the resulting voltages $\{W_{k+1}^{ij}\}_{j \in \mathcal{N}_i^a}$ to its neighbors.
    \item Each agent $i$ uses its updated voltages $\{V_{\ell,k+1}\}_{\ell \in \mathcal{N}_i \cap \mathcal{B}}$, multipliers $\{\lambda_k^\ell\}_{\ell \in \mathcal{N}_i \cap \mathcal{B}}$, and received bus voltages $\{W^{j\ell}_{k+1}\}^{\ell \in \mathcal{N}_i \cap \mathcal{B}}_{j: \ell \in \mathcal{N}_j^a}$ and multipliers $\{\mu^{j\ell}_k\}^{\ell \in \mathcal{N}_i \cap \mathcal{B}}_{j: \ell \in \mathcal{N}_j^a}$ to compute $\{z_{k+1}^\ell\}_{\ell \in \mathcal{N}_i \cap \mathcal{B}}$ and broadcasts them to its neighbors.
    \item Each agent $i$ updates its multipliers using its own updated bus voltages $\{V_{j,k+1}\}_{j \in \mathcal{N}_i \cap \mathcal{B}}$, $\{W^{ij}_{k+1}\}_{j \in \mathcal{N}_i \cap \mathcal{B}}$, estimated voltages $\{z^j_{k+1}\}_{j \in \mathcal{N}_i \cap \mathcal{B}}$, as well as received voltage estimates $\{z_{k+1}^j\}_{j \in \mathcal{N}_i^a}$.
\end{enumerate}

\section{Simulation Results} \label{sect:sim}

This section demonstrates the effectiveness of the proposed optimization formulations for ROA characterization as well as stability monitoring and reactive power support through numerical simulations on IEEE 39-bus system \cite{bills1970line}. We also examine the proposed methods on more general system models (discrete-time LTC model with constant step-size and deadband, and full power flow model).

Throughout the simulations, base load admittance is assumed to be such that the load power under rated secondary voltage matches the specified base power. The reference secondary voltages are set to be 1 p.u. for all load buses. For simulations using the reactive power model in Sections \ref{sect:sim:ROA} and \ref{sect:sim:admm}, line reactance values are retained while transformers, line resistance and charging capacitance are ignored. The reactive power loads are scaled up by $280\%$ to emulate stressed system condition.

Nonconvex problems are solved by IPOPT v0.5.4 \cite{wachter2006implementation} with MUMPS linear solver. The convex ADMM algorithm is solved using MOSEK v9.1.4 \cite{mosek} with CVX \cite{cvx} interfaced through MATLAB. All computations were done on a laptop with 2.2 GHz 6-Core Intel Core i7 processors and 16GB of memory.

\subsection{Characterization of ROA} \label{sect:sim:ROA}

In this section, we demonstrate the characterization of the ROA on IEEE 39-bus system using \eqref{eq:p1:original}. We examine the impact of system contingency on the ROA characterization.

Fig. \ref{fig:roa} shows the characterized ROAs of the 39-bus system projected on bus 3 and 4 before and after line (8,9) is tripped. The projected ROAs are characterized by solving three instances of \eqref{eq:p1:original} with cost vector set to $\bg{e}^3$, $\bg{e}^4$, and $\bg{e}^3 + \bg{e}^4$ (where $\bg{e}^i$ is the $i$th canonical basis in $\mathbb{R}^n$) under each scenario. Each optimal tap ratio $\bg{r}^i, i = 1, 2, 3$ provides a distinct inner approximation $\mathcal{A}(\bg{r}^i)$ of the true ROA. It is clear that the union $\bigcup_i \mathcal{A}(\bg{r}^i)$ is also an ROA. In Fig. \ref{fig:roa}, the blue shaded region enclosed by black solid lines shows the ROA $\mathcal{Q}^{\mathrm{pre}}$ before line tripping, while the green shaded region enclosed by red dashed lines shows the ROA $\mathcal{Q}^{\mathrm{post}}$ after line tripping. As expected, the projected ROA shrinks significantly after the contingency. 

We take a point that lies inside $\mathcal{Q}^{\mathrm{pre}}$ but outside $\mathcal{Q}^{\mathrm{post}}$ and examine its dynamics before and after line (8,9) is tripped. The chosen point is denoted by $\bg{r}^*$ and is shown in Fig. \ref{fig:roa}. The detailed tap values of $\bg{r}^*$ are given in Table \ref{tb:tap} in Appendix \ref{app:tap}. The evolution of tap position dynamics at bus 8 is shown in Fig. \ref{fig:dynamics_roa}. It is seen that the system collapses after the line is tripped but is stable otherwise.

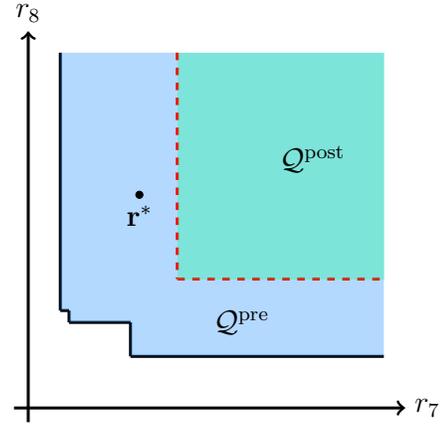
\begin{figure}[!t]
	\centering
	\resizebox{2.4in}{!}{
	\begin{tikzpicture}
		
	\draw[very thick,->] (-0.2,0) -- (5.3,0) node[right] {\large $r_7$};
	\draw[very thick,->] (0,-0.2) -- (0,5.3) node[above] {\large $r_8$};
	
	\draw[very thick] (0.451,5) -- (0.451, 1.371);
	\draw[very thick] (0.451, 1.371) -- (0.574, 1.371);
	\draw[very thick] (0.574, 1.371) -- (0.574, 1.204);
	\draw[very thick] (0.574, 1.204) -- (1.438, 1.204);
	\draw[very thick] (1.438, 1.204) -- (1.438, 0.726);
	\draw[very thick] (1.438, 0.726) -- (5, 0.726);
	\fill[blue!50!cyan,opacity=0.3] (0.451,5) -- (0.451, 1.371) -- (0.574, 1.371) -- (0.574, 1.204) -- (1.438, 1.204) -- (1.438, 0.726) -- (5, 0.726) -- (5,5) -- cycle;
	\node at (3,1.2) {\large $\mathcal{Q}^{\mathrm{pre}}$};
	
	\draw[very thick,dashed,red] (2.096, 5) -- (2.096, 1.815);
	\draw[very thick,dashed,red] (2.096, 1.815) -- (5,1.815);
	\fill[green!50!cyan,opacity=0.3] (2.096, 5) -- (2.096, 1.815) -- (5,1.815) -- (5,5) -- cycle;
	\node at (4,3.5) {\large $\mathcal{Q}^{\mathrm{post}}$};
	
	\draw[fill] (1.563,3) circle [radius=0.05];
	\node at (1.563,2.7) {\large $\bg{r}^*$};
		
	\end{tikzpicture}
	}	
	\caption{ROA Characterizations for IEEE 39-bus system before and after contingency. The blue shaded region enclosed by black solid lines shows the ROA $\mathcal{Q}^{\mathrm{pre}}$ before line tripping, while the green shaded region enclosed by red dashed lines shows the ROA $\mathcal{Q}^{\mathrm{post}}$ after line tripping. The black dot $\bg{r}^*$ shows a point lying outside $\mathcal{Q}^{\mathrm{post}}$ but inside $\mathcal{Q}^{\mathrm{pre}}$.} 
	\label{fig:roa}
\end{figure}

\begin{figure}[!t]
	\centering
	\includegraphics[width=2.8in]{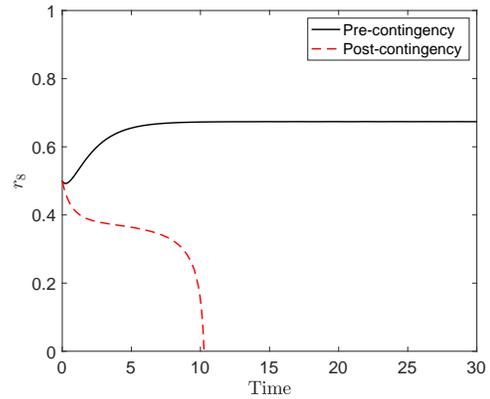}
	\caption{Comparison of tap changer dynamics at bus 8 before and after contingency. Initial tap position is marked by black dot in Fig. \ref{fig:roa}.}
	\label{fig:dynamics_roa}
\end{figure}

\subsection{Distributed Stability Monitoring and Control} \label{sect:sim:admm}

We next examine the performance of the distributed algorithm for stability monitoring and control introduced in Sections \ref{sect:stability_formulation} and \ref{sect:admm}. We test the ADMM algorithm on the following four scenarios: 
\begin{enumerate}
    \item Steady-state after line $(8,9)$ outage where the initial tap position is inside $\mathcal{Q}^{\mathrm{post}}$; 
    \item Tap position $\bg{r}^*$ after line $(8,9)$ outage; 
    \item Tap position $\bg{r}^*$ after line $(8,9)$ outage with additional load increase ($300\%$ higher than base load);
    \item Tap position $\bg{r}^*$ after line $(3,4)$ outage with additional load increase ($300\%$ higher than base load).
\end{enumerate}

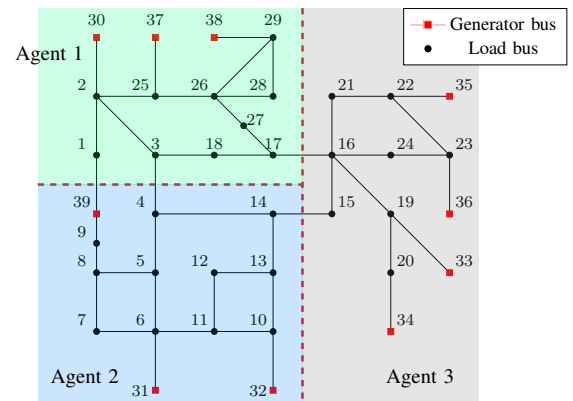
\begin{figure}[!t]
	\centering
	\resizebox{3in}{!}{
	\begin{tikzpicture}
	
	\draw[fill=red,red] (-0.05,6.95) rectangle (0.05,7.05);
	\node at (0,7.3) {\footnotesize $30$};
	\draw[fill=red,red] (0.95,6.95) rectangle (1.05,7.05);
	\node at (1,7.3) {\footnotesize $37$};
	\draw[fill=red,red] (1.95,6.95) rectangle (2.05,7.05);
	\node at (2,7.3) {\footnotesize $38$};
	\draw (2.05,7) -- (3,7);
	\draw[fill] (3,7) circle [radius=0.05];
	\node at (3,7.3) {\footnotesize $29$};
	
	\draw (0,6.95) -- (0,6);
	\draw (1,6.95) -- (1,6);
	\draw (3,7) -- (3,6);
	\draw (3,7) -- (2,6);
	
	\draw[fill] (0,6) circle [radius=0.05];
	\node at (-0.25,6.2) {\footnotesize $2$};
	\draw (0,6) -- (1,6);
	\draw[fill] (1,6) circle [radius=0.05];
	\node at (0.75,6.2) {\footnotesize $25$};
	\draw (1,6) -- (2,6);
	\draw[fill] (2,6) circle [radius=0.05];
	\node at (1.75,6.2) {\footnotesize $26$};
	\draw (2,6) -- (3,6);
	\draw[fill] (3,6) circle [radius=0.05];
	\node at (2.75,6.2) {\footnotesize $28$};
	\draw[fill] (2.5,5.5) circle [radius=0.05];
	\node at (2.7,5.6) {\footnotesize $27$};
	
	\draw (0,6) -- (0,5);
	\draw (0,6) -- (1,5);
	\draw (2,6) -- (3,5);
	
	\draw[fill] (1,5) circle [radius=0.05];
	\node at (1,5.2) {\footnotesize $3$};
	\draw (1,5) -- (2,5);
	\draw[fill] (2,5) circle [radius=0.05];
	\node at (2,5.2) {\footnotesize $18$};
	\draw (2,5) -- (3,5);
	\draw[fill] (3,5) circle [radius=0.05];
	\node at (3,5.2) {\footnotesize $17$};
	
	\draw (1,5) -- (1,4);
	
	\draw[fill] (1,4) circle [radius=0.05];
	\node at (0.75,4.2) {\footnotesize $4$};
	\draw[fill] (3,4) circle [radius=0.05];
	\node at (2.75,4.2) {\footnotesize $14$};
	\draw (1,4) -- (3,4);
	
	\draw (1,4) -- (1,3);
	\draw (3,4) -- (3,3);
	
	\draw[fill] (1,3) circle [radius=0.05];
	\node at (0.75,3.2) {\footnotesize $5$};
	\draw[fill] (2,3) circle [radius=0.05];
	\node at (1.75,3.2) {\footnotesize $12$};
	\draw (2,3) -- (3,3);
	\draw[fill] (3,3) circle [radius=0.05];
	\node at (2.75,3.2) {\footnotesize $13$};
	
	\draw (1,3) -- (1,2);
	\draw (2,3) -- (2,2);
	\draw (3,3) -- (3,2);
	
	\draw[fill] (1,2) circle [radius=0.05];
	\node at (0.75,2.2) {\footnotesize $6$};
	\draw (1,2) -- (2,2);
	\draw[fill] (2,2) circle [radius=0.05];
	\node at (1.75,2.2) {\footnotesize $11$};
	\draw (2,2) -- (3,2);
	\draw[fill] (3,2) circle [radius=0.05];
	\node at (2.75,2.2) {\footnotesize $10$};
	
	\draw (1,2) -- (1,1.05);
	\draw (3,2) -- (3,1.05);
	
	\draw[fill=red,red] (2.95,0.95) rectangle (3.05,1.05);
	\node at (2.75,1) {\footnotesize $32$};
	\draw[fill=red,red] (0.95,0.95) rectangle (1.05,1.05);
	\node at (0.75,1) {\footnotesize $31$};
	
	\draw[fill] (4,6) circle [radius=0.05];
	\node at (4.25,6.2) {\footnotesize $21$};
	\draw (4,6) -- (5,6);
	\draw[fill] (5,6) circle [radius=0.05];
	\node at (5.25,6.2) {\footnotesize $22$};
	\draw (5,6) -- (5.95,6);
	\draw[fill=red,red] (5.95,5.95) rectangle (6.05,6.05);
	\node at (6.25,6.2) {\footnotesize $35$};
	
	\draw (4,6) -- (4,5);
	\draw (5,6) -- (6,5);
	
	\draw (3,5) -- (4,5);
	\draw[fill] (4,5) circle [radius=0.05];
	\node at (4.25,5.2) {\footnotesize $16$};
	\draw (4,5) -- (5,5);
	\draw[fill] (5,5) circle [radius=0.05];
	\node at (5.25,5.2) {\footnotesize $24$};
	\draw (5,5) -- (6,5);
	\draw[fill] (6,5) circle [radius=0.05];
	\node at (6.25,5.2) {\footnotesize $23$};
	
	\draw (4,5) -- (4,4);
	\draw (4,5) -- (5,4);
	\draw (6,5) -- (6,4);
	
	\draw (3,4) -- (4,4);
	\draw[fill] (4,4) circle [radius=0.05];
	\node at (4.25,4.2) {\footnotesize $15$};
	\draw[fill] (5,4) circle [radius=0.05];
	\node at (5.25,4.2) {\footnotesize $19$};
	\draw[fill=red,red] (5.95,3.95) rectangle (6.05,4.05);
	\node at (6.25,4.2) {\footnotesize $36$};
	
	\draw (5,4) -- (5,3);
	\draw (5,4) -- (6,3);
	
	\draw[fill] (5,3) circle [radius=0.05];
	\node at (5.25,3.2) {\footnotesize $20$};
	\draw[fill=red,red] (5.95,2.95) rectangle (6.05,3.05);
	\node at (6.25,3.2) {\footnotesize $33$};
	
	\draw (5,3) -- (5,2.05);
	
	\draw[fill=red,red] (4.95,1.95) rectangle (5.05,2.05);
	\node at (5.25,2.2) {\footnotesize $34$};
    
	\draw[fill] (0,5) circle [radius=0.05];
	\node at (-0.25,5.2) {\footnotesize $1$};
	\draw (0,5) -- (0,4.05);
	\draw[fill=red,red] (-0.05,3.95) rectangle (0.05,4.05);
	\node at (-0.25,4.2) {\footnotesize $39$};
	\draw[fill] (0,3.5) circle [radius=0.05];
	\node at (-0.25,3.7) {\footnotesize $9$};
	\draw[fill] (0,3) circle [radius=0.05];	
	\node at (-0.25,3.2) {\footnotesize $8$};
	\draw[fill] (0,2) circle [radius=0.05];	
	\node at (-0.25,2.2) {\footnotesize $7$};
	\draw (0,3.95) -- (0,2);
	
	\draw (0,3) -- (1,3);
	\draw (0,2) -- (1,2);
	
	\draw[very thick,dashed,red] (3.5,7.5) -- (3.5,0.8);
	\draw[very thick,dashed,red] (-1,4.5) -- (3.5,4.5);
	
	\fill[green!50!cyan,opacity=0.2] (-1,7.5) -- (3.5,7.5) -- (3.5,4.5) -- (-1,4.5) -- cycle;
	\fill[blue!50!cyan,opacity=0.2] (-1,0.8) -- (3.5,0.8) -- (3.5,4.5) -- (-1,4.5) -- cycle;
	\fill[red!50!cyan,opacity=0.2] (3.5,7.5) -- (6.5,7.5) -- (6.5,0.8) -- (3.5,0.8) -- cycle;
	
	\node at (-0.8,6.7) {Agent 1};
	\node at (5.5,1.2) {Agent 3};
	\node at (-0.2,1.2) {Agent 2};
	
	\begin{customlegend}[
    legend entries={ 
    Generator bus,
    Load bus
    },
    legend style={at={(8,7.5)},font=\small}] 
    \addlegendimage{only marks, mark=square*, fill=red, draw=white, sharp plot}
    \addlegendimage{mark=*, ball color=black, draw=white, sharp plot}
    \end{customlegend}
	
	\end{tikzpicture}
	}	
	\caption{Partition of IEEE 39-bus system.} 
	\label{fig:39bus}
\end{figure}

The system is partitioned into three agents as shown in Fig. \ref{fig:39bus}. The algorithm terminates when the objective values settle to relative error (or absolute error when the optimum is $0$) of less than $10^{-4}$ with respect to the global optimum obtained by MOSEK. We set the penalty parameter to be $\rho = 200$. Each variable is initialized to be $0.1$ p.u. larger than their optimal values, which is reasonable considering system operators generally have good knowledge of typical system conditions. The performance of ADMM algorithm is tabulated in Table \ref{tb:admm}. It is seen that the algorithm typically converges in around one hundred iterations, taking tens of seconds for each agent in total. Considering the time-scale of LTC actions in the minute range, the computational complexity is quite reasonable and is well suited for online application.

\begin{table}[!t]
\renewcommand{\arraystretch}{1.1}
\caption{Performance of ADMM Algorithm for Stability Monitoring and Control}
\label{tb:admm}
\centering
\begin{tabular}{ccccc}
\toprule
\multirow{2}{*}{Scenario} & Optimal & \# of & \multirow{2}{*}{Time (sec.)} & Time per \\
& objective & iterations & & subsystem (sec.) \\
\midrule
1 & $0$ & $39$ & $33.01$ & $11.00$ \\
2 & $4.1870$ & $89$ & $73.52$ & $24.51$ \\
3 & $12.3824$ & $83$ & $65.35$ & $21.78$ \\
4 & $20.4829$ & $113$ & $109.72$ & $36.57$ \\
\bottomrule
\end{tabular}
\end{table}

The convergence property of the ADMM solver is shown more clearly in Fig. \ref{fig:iteration}, which presents the evolution of the relative error of the ADMM iterations under scenario 4. Roughly speaking, the relative error is one order of magnitude smaller every 25 iterations or so, and it reduces to less than $1\%$ in less than 50 iterations.


\begin{figure}[!t]
	\centering
	\resizebox{6cm}{!}{
	\begin{tikzpicture}
	\begin{axis}[
		ymode=log,
	    ymin=0.00000001, ymax=10,
		xlabel= Number of iteration,
		ylabel= Relative error]
		\addplot[black,very thick] 
		coordinates{(1, 4.369032073) (2, 4.693635959) (3, 4.075978154) (4, 3.760556166) (5, 3.547373884) (6, 3.282806164) (7, 2.914652463) (8, 2.452570166) (9, 1.941122708) (10, 1.433769105) (11, 0.975838728) (12, 0.597320878) (13, 0.311329321) (14, 0.115612951) (15, 0.003724615) (16, 0.067024670) (17, 0.095550401) (18, 0.106931185) (19, 0.112543005) (20, 0.117145982) (21, 0.120392152) (22, 0.119353673) (23, 0.111072456) (24, 0.094398311) (25, 0.070714238) (26, 0.043590824) (27, 0.017696992) (28, 0.002543552) (29, 0.014113545) (30, 0.016044051) (31, 0.009441081) (32, 0.003025439) (33, 0.017949325) (34, 0.032062235) (35, 0.042929602) (36, 0.049306793) (37, 0.051147813) (38, 0.049323602) (39, 0.045171324) (40, 0.040076099) (41, 0.035094575) (42, 0.030806716) (43, 0.027317649) (44, 0.024395939) (45, 0.021673240) (46, 0.018834547) (47, 0.015744002) (48, 0.012483049) (49, 0.009307736) (50, 0.006552974) (51, 0.004522421) (52, 0.003398588) (53, 0.003196684) (54, 0.003768937) (55, 0.004849549) (56, 0.006124048) (57, 0.007299585) (58, 0.008158800) (59, 0.008587185) (60, 0.008571878) (61, 0.008178352) (62, 0.007515158) (63, 0.006698783) (64, 0.005826847) (65, 0.004964246) (66, 0.004143178) (67, 0.003372203) (68, 0.002649712) (69, 0.001976092) (70, 0.001361341) (71, 0.000826089) (72, 0.000396865) (73, 0.000098428) (74, 0.000053973) (75, 0.000060395) (76, 0.000061995) (77, 0.000281407) (78, 0.000556837) (79, 0.000844727) (80, 0.001106295) (81, 0.001311793) (82, 0.001443652) (83, 0.001495990) (84, 0.001473025) (85, 0.001385498) (86, 0.001247930) (87, 0.001075481) (88, 0.000882383) (89, 0.000681119) (90, 0.000482488) (91, 0.000295499) (92, 0.000128332) (93, 0.000012177) (94, 0.000120583) (95, 0.000193090) (96, 0.000228333) (97, 0.000227588) (98, 0.000195029) (99, 0.000136957) (100, 0.000061689) (101, 0.000021979) (102, 0.000105176) (103, 0.000180274) (104, 0.000241356) (105, 0.000284372) (106, 0.000307692) (107, 0.000311341) (108, 0.000297095) (109, 0.000267687) (110, 0.000226737) (111, 0.000178032) (112, 0.000125694) (113, 0.000073306) (114, 0.000024263) (115, 0.000018614) (116, 0.000053170) (117, 0.000078002) (118, 0.000092350) (119, 0.000096396) (120, 0.000091004) (121, 0.000077634) (122, 0.000058255) (123, 0.000035017) (124, 0.000010233) (125, 0.000013879) (126, 0.000035434) (127, 0.000053085) (128, 0.000065756) (129, 0.000072960) (130, 0.000074806) (131, 0.000071548) (132, 0.000064028) (133, 0.000053183) (134, 0.000040078) (135, 0.000025896) (136, 0.000011821) (137, 0.000001298) (138, 0.000012606) (139, 0.000021461) (140, 0.000027526) (141, 0.000030655) (142, 0.000030929) (143, 0.000028663) (144, 0.000024304) (145, 0.000018405) (146, 0.000011609) (147, 0.000004460) (148, 0.000002425) (149, 0.000008476) (150, 0.000013398) (151, 0.000016921) (152, 0.000018921) (153, 0.000019374) (154, 0.000018462) (155, 0.000016367) (156, 0.000013358) (157, 0.000009724) (158, 0.000005838) (159, 0.000001931) (160, 0.000001646) (161, 0.000004612) (162, 0.000006948) (163, 0.000008442) (164, 0.000009191) (165, 0.000009118) (166, 0.000008363) (167, 0.000007023) (168, 0.000005253) (169, 0.000003351) (170, 0.000001297) (171, 0.000000617) (172, 0.000002317) (173, 0.000003616) (174, 0.000004598) (175, 0.000005144) (176, 0.000005158) (177, 0.000004982) (178, 0.000004340) (179, 0.000003467) (180, 0.000002448) (181, 0.000001332) (182, 0.000000282) (183, 0.000000665) (184, 0.000001514) (185, 0.000002061) (186, 0.000002530) (187, 0.000002696) (188, 0.000002584) (189, 0.000002340) (190, 0.000001937) (191, 0.000001505) (192, 0.000000894) (193, 0.000000329) (194, 0.000000156) (195, 0.000000660) (196, 0.000001055) (197, 0.000001299) (198, 0.000001380) (199, 0.000001445) (200, 0.000001264)
};
	\end{axis}
	\end{tikzpicture}}
	\caption{Relative error of objective value as a function of number of iterations for distributed ADMM solver of \eqref{eq:p2:convex} under scenario 4.}
	\label{fig:iteration}
\end{figure}
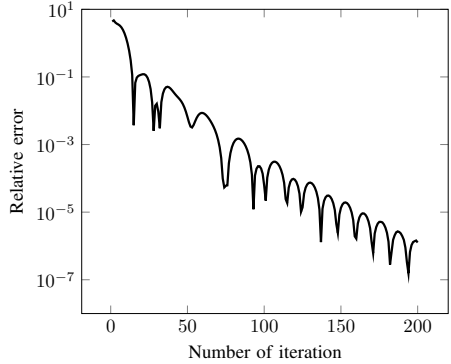

Table \ref{tb:Qsup} shows the percentage of system-wide reactive power reduction in p.u. to restore stability. The reactive powers refers to the power consumption at the equilibrium, i.e., when the secondary voltages are 1 p.u.. Around $5\%$ of load need to be reduced to restore stability for the simulated scenarios. In addition, the reactive power support at individual load level is shown in Fig. \ref{fig:Qsup:s3} for scenario 3.

\begin{table}[!t]
\renewcommand{\arraystretch}{1.1}
\centering
\caption{Reactive Power Support to Restore System Stability}
\begin{tabular}{cccc}
\toprule
Scenario & Total load & Total support & Percentage \\
\midrule
1 & $55.10$ & $0$ & $0\%$ \\
2 & $55.10$ & $1.93$ & $3.50\%$  \\
3 & $58.004$ & $3.31$ & $5.70\%$ \\
4 & $58.004$ & $4.68$ & $8.07\%$ \\
\bottomrule
\end{tabular}
\label{tb:Qsup}
\end{table}

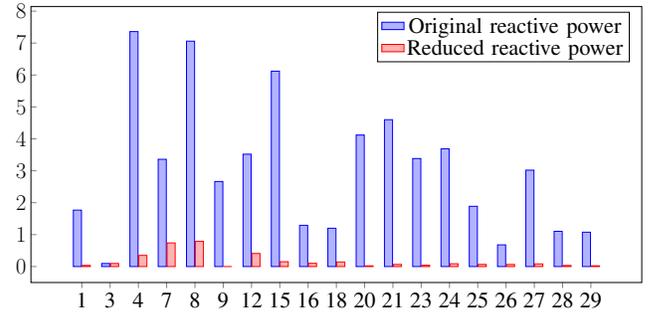
\begin{figure}[!t]
	\centering
	\resizebox{8.5cm}{!}{
	\begin{tikzpicture}
	\begin{axis}[
	ybar,
	bar width=.22cm,
	width=\textwidth,
	height=.5\textwidth,
	tick label style={font=\LARGE},
	tickpos=left,
	xticklabels={1,3,4,7,8,9,12,15,16,18,20,21,23,24,25,26,27,28,29},
	xtick={1,2,3,4,5,6,7,8,9,10,11,12,13,14,15,16,17,18,19},
	ymin=-0.5,
	legend entries={\LARGE{Original reactive power}, \LARGE{Reduced reactive power}},
	y tick label style={/pgf/number format/.cd,%
		scaled y ticks = false,
		set thousands separator={},
		fixed
	},
	]
	\addplot +[bar shift=-.12cm, area legend] coordinates {
		(1,1.768) (2,0.096) (3,7.36) (4,3.36) (5,7.06) (6,2.664) (7,3.52) (8,6.12) (9,1.292) (10,1.2) (11,4.12) (12,4.6) (13,3.384) (14,3.688) (15,1.888) (16,0.68) (17, 3.02) (18,1.104) (19,1.076)};
	
	\addplot  +[bar shift=.12cm, area legend]coordinates {
		(1,0.0354) (2,0.096) (3,0.3497) (4,0.7402) (5,0.7901) (6,0) (7,0.4101) (8,0.151) (9,0.1011) (10,0.1444) (11,0.0216) (12,0.0666) (13,0.0395) (14,0.0874) (15,0.0673) (16,0.0655) (17,0.0831) (18,0.0334) (19,0.0233)};
	\end{axis}
	\end{tikzpicture} }
	\caption{Comparison of original reactive power load and load reduction to restore system stability for scenario 3.}
	\label{fig:Qsup:s3}
\end{figure}

\subsection{Extensions to More General Models} \label{sect:sim:extension}

In this section, we study the effectiveness of the proposed formulation for stability monitoring and instability mitigation under more realistic system models. Two sets of computational experiments are performed. First, we rerun the stability simulations in the last section using discrete-time LTC dynamics model with constant step size and and deadband, as opposed to the continuous-time approximation \eqref{eq:dynamics}. Second, we extend the instability evaluation and mitigation formulation \eqref{eq:p2:convex} to a full-fledged power flow model and examine its performance using the discrete-time LTC model.

\subsubsection{Discrete-time LTC dynamics}
In this section, we examine the continuous-time approximation of the LTC dynamics by comparing it with the discrete-time one. The discrete-time model for bus $i \in \mathcal{V}_L$ is given by \cite[Sect. 4.4]{van2007voltage}
\begin{align} \label{eq:discrete}
	    r^{(k+1)}_i = 
	    \begin{cases} 
	    r^{(k)}_i + \Delta r_i, & \text{if } V_{s,i} > V_{0,i} + d_i, \\
	    r^{(k)}_i - \Delta r_i, & \text{if } V_{s,i} < V_{0,i} - d_i, \\
	    r^{(k)}_i, & \text{otherwise},
	    \end{cases}
\end{align}
in contrast to the continuous-time approximation in \eqref{eq:dynbusi}. For the simulation, we set the deadband $d_i = 0.01$ and tap $\Delta r_i = 0.0125$ for all load bus $i$.

The stability behavior of the LTC dynamics described by the discrete-time model \eqref{eq:discrete} and its continuous-time counterpart \eqref{eq:dynbusi} under the four scenarios in the last section are tabulated in Table \ref{tb:discrete}. Two cases are examined in each scenario: for scenarios 2--4, we simulate LTC dynamics with and without the minimum reactive power support needed to restore system stability based on the continuous-time model; since the continuous-time model is stable post-contingency with no reactive power support in scenario 1, we also include pre-contingency condition for stability evaluation in addition to the post-contingency one. 

It is seen from the table that the stability behaviors are consistent for seven out of eight cases. For the inconsistent one (marked in bold in the table), the continuous-time approximation is unstable whereas the discrete-time one is stable, so the approximation yields conservative yet feasible strategy.

\begin{table}[!t]
\renewcommand{\arraystretch}{1.1}
\caption{Comparison of Stability Behavior by Discrete-Time Model and Its Continuous-Time Approximation}
\label{tb:discrete}
\centering
\begin{tabular}{cccc}
\toprule
\multicolumn{2}{c}{\multirow{2}{*}{Case}} & \multicolumn{2}{c}{Stability} \\
& & Continuous & Discrete \\
\midrule
\multirow{2}{*}{Scenario 1} & Pre-contingency & Stable & Stable \\
& Post-contingency & Stable & Stable \\
\multirow{2}{*}{Scenario 2} & With support & Stable & Stable \\
& Without support & \textbf{Unstable} & \textbf{Stable} \\
\multirow{2}{*}{Scenario 3} & With support & Stable & Stable \\
& Without support & Unstable & Unstable \\
\multirow{2}{*}{Scenario 4} & With support & Stable & Stable \\
& Without support & Unstable & Unstable \\
\bottomrule
\end{tabular}
\end{table}

\subsubsection{Full power flow model}

In this section, we examine the performance of the natural extension of the proposed main formulation \eqref{eq:p2:convex} to full power flow model. Let tap position $\bg{r}_0$, reference secondary side voltage $\bg{V}_0$, base load conductance $\bg{g}_s$ and susceptance $\bg{b}_s$, real power generation setpoints $\bg{P}^{\mathrm{sp}}$ and voltage magnitude setpoints $\bg{V}^{\mathrm{sp}}$ for generator buses be given. Denote the power flow equations at bus $i$ by $P_i(\bg{V},\bg{\theta})$ and $Q_i(\bg{V},\bg{\theta})$, 
and the squared secondary side voltage magnitudes by $\bg{u}$, then the instability monitoring and mitigation problem can be formulated as
\begin{subequations} \label{eq:full}
\begin{align}
    \min_{ \substack{\bg{u}, \bg{b}_\mathrm{sup} \\ \bg{V}, \bg{\theta}} } \quad & \sum\nolimits_{i \in \mathcal{V}_L} \left( \frac{|g_{s,i} +jb_{s,i}|}{b_{s,i}} b_{\mathrm{sup},i} \right)^2 \label{eq:full:obj} \\
    \text{s.t.} \quad 
    & P_i(\bg{V},\bg{\theta}) = u_i \frac{g_{s,i}}{b_{s,i}}(b_{s,i} - b_{\mathrm{sup},i}), && i \in \mathcal{V}_L \label{eq:full:b} \\
    & Q_i(\bg{V},\bg{\theta}) = u_i (b_{s,i} - b_{\mathrm{sup},i}), && i \in \mathcal{V}_L \label{eq:full:c} \\
    & P_i(\bg{V},\bg{\theta}) = P_i^{\mathrm{sp}}, && \hspace{-0.37in} i \in \mathcal{V}_G \setminus \mathcal{V}_{\mathrm{ref}} \label{eq:full:d} \\
    & |V_i| = V_i^{\mathrm{sp}}, && i \in \mathcal{V}_G \label{eq:full:e} \\
    & u_i \ge V_{0,i}^2, && i \in \mathcal{V}_L \label{eq:full:f} \\
    & |V_i|^2 \le u_i r_{0,i}^2, && i \in \mathcal{V}_L \label{eq:full:g} \\
    & -b_{s,i}^- \le b_{\mathrm{sup},i} \le b_{s,i}^+, && i \in \mathcal{V}_L \label{eq:full:h}
\end{align}
\end{subequations}
Equations \eqref{eq:full:b}--\eqref{eq:full:c} model the power flow equations of load buses (with LTC), where the loads are modeled by constant admittance models with constant power factor. Equations \eqref{eq:full:d}--\eqref{eq:full:e} fix the real powers and voltage magnitudes of generator buses at their set-points as these quantities are not regulated at the load side. Equations \eqref{eq:full:f}--\eqref{eq:full:g} ensure $\bg{r}_0 \in \mathcal{P}$. Finally, \eqref{eq:full:h} guarantees the load type (generation or consumption) stays the same, where $b_{s,i}^+ = \max\{b_{s,i}, 0\}$ and $b_{s,i}^- = \max\{-b_{s,i}, 0\}$. The problem is formulated as an extended optimal power flow problem in \textsc{Matpower} \cite{zimmerman2011matpower} and is solved by IPOPT.

Tests are carried out on the 39-bus system with $9$ p.u. shunt capacitance added at buses 5, 6, 11, 14, and 17 to keep voltage profile high even under stressed loading condition --- a scenario prone to system instability. We scale all load admittance and generator real powers to $2.89$ times of their base values and calculate the steady-state primary side voltages, which we define to be the initial tap position $\bg{r}_0$. The average load voltage is $0.92$ p.u., with only two voltages below $0.85$ p.u., which looks fairly healthy. We simulate system stability under this loading condition subject to line outage contingencies.

Out of 46 lines in the system, we tested all 35 line outages whose removal does not disconnect the network. All 14 cases deemed stable by the optimization problem \eqref{eq:full} (with zero optimal cost) are verified to be indeed stable by numerical simulations of the discrete-time dynamics \eqref{eq:discrete}. For the 21 cases whose stability behaviors are undecided by \eqref{eq:full:obj}, 10 of them turn out to be unstable. For the 21 indefinite cases, minimum load side support determined by \eqref{eq:full} are provided and their stability are reevaluated. Three cases are still unstable even after applying the support (marked by bold in the table), all of which are unstable before load side support. This suggests the proposed stability condition can be inexact for full power flow model. One can derive more conservative load side support strategies to ensure system stability. Simulation results show that stability can be restored for all three cases by solving \eqref{eq:full} with $r_{0,i}' = r_{0,i} - 0.1$, i.e., by forcing stability condition \eqref{eq:full:g} to hold for more conservative tap positions. Detailed simulation results are summarized in Table \ref{tb:full} in Appendix \ref{app:tap}.

In this subsection, we have briefly examined the generalizability of the proposed approach to discrete-time dynamics and full power flow model with promising results, but a thorough and rigorous analysis is subject to future work as it requires significantly different mathematical development.

\section{Conclusion}


In this paper, we have studied the monitoring and control of long-term voltage stability considering LTC dynamics. It has been shown that for reactive power model, the networked LTC dynamics admit a unique stable equilibrium genericsdally and a large ROA can be characterized in a computationally efficient fashion. An ADMM-based distributed algorithm leveraging the developed ROA characterization is capable of monitoring system voltage stability and computing secondary reactive power support in emergency conditions. Simulation results suggest the extension of the proposed stability monitoring and control approach to more general models is empirically sound. Future research will study such generalization in a more thorough and rigorous way.

\appendices
\section{Some Known Results on Stability of LTC System} \label{app:known}

\begin{thm}[\hspace{1sp}{\cite[Lem. 3]{liu1989analysis}}] \label{thm:invariant}
    $\mathcal{P}$ is an invariant set of the dynamical system \eqref{eq:dynamics}.
\end{thm}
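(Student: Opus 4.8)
The plan is to verify the Nagumo subtangentiality condition on $\mathcal{P}$: on each face of $\partial\mathcal{P}$ where an active constraint $\dot r_i \ge 0$ is binding, the vector field of \eqref{eq:dynamics} must be subtangent to $\mathcal{P}$, i.e.\ the active quantity $V_{s,i}-V_{0,i}$ must be non-decreasing along trajectories. Since $\mathcal{P}=\{\bg{r}\in\mathbb{R}^n_{>0}:V_{s,i}(\bg{r})\ge V_{0,i},\ \forall i\}$ is cut out by these $n$ smooth constraints, positive invariance follows once I show that $\frac{d}{dt}V_{s,i}\ge 0$ whenever $V_{s,i}=V_{0,i}$ (equivalently $\dot r_i=0$) and $\bg{r}\in\mathcal{P}$.

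First I would compute the Jacobian of $\bg{r}\mapsto\bg{V}_s(\bg{r})$. Writing $\bg{B}_{LL}(\bg{r})=\tilde{\bg{B}}_{LL}+[\bg{b}_s][\bg{r}]^{-2}$, only the $j$th diagonal entry depends on $r_j$, so $\partial_{r_j}\bg{B}_{LL}=-\frac{2b_{s,j}}{r_j^3}\,\bg{e}_j\bg{e}_j^\top$, where $\bg{e}_j$ is the $j$th standard basis vector. Differentiating $\bg{V}_L=\bg{Z}\bg{h}$ with $\bg{Z}=\bg{B}_{LL}^{-1}$ via $\partial_{r_j}\bg{Z}=-\bg{Z}(\partial_{r_j}\bg{B}_{LL})\bg{Z}$ gives the compact expression
\[
  \frac{\partial V_i}{\partial r_j}=\frac{2b_{s,j}}{r_j^3}\,V_j\,Z_{ij}.
\]
Because $-\bg{B}_{LL}$ is an irreducible M-matrix we have $\bg{Z}>0$ entrywise (as already noted) and $\bg{V}_L=\bg{Z}\bg{h}>\mathbbold{0}$ since $\bg{h}\ge\mathbbold{0}$ is nonzero; hence every factor is positive and $\partial V_i/\partial r_j>0$ for all $i,j$. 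For the secondary voltages $V_{s,i}=V_i/r_i$ this yields, for the off-diagonal entries $j\ne i$,
\[
  \frac{\partial V_{s,i}}{\partial r_j}=\frac{1}{r_i}\frac{\partial V_i}{\partial r_j}=\frac{2b_{s,j}}{r_i r_j^3}\,V_j\,Z_{ij}>0,
\]
so the sensitivity matrix is Metzler and \eqref{eq:dynamics} is cooperative. The diagonal entry $\partial V_{s,i}/\partial r_i$ may have either sign, but that is immaterial below.

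The decisive observation is that the troublesome diagonal term drops out exactly on the active face. At any $\bg{r}\in\mathcal{P}$ with $\dot r_i=0$ one computes
\[
  \frac{d}{dt}V_{s,i}=\sum_{k=1}^{n}\frac{\partial V_{s,i}}{\partial r_k}\,\dot r_k=\sum_{k\ne i}\frac{\partial V_{s,i}}{\partial r_k}\,\dot r_k,
\]
because the $k=i$ summand carries the factor $\dot r_i=0$. Each remaining term is the product of a strictly positive sensitivity $\partial V_{s,i}/\partial r_k$ and a nonnegative $\dot r_k\ge 0$ (the latter since $\bg{r}\in\mathcal{P}$), whence $\frac{d}{dt}V_{s,i}\ge 0$. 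Thus a coordinate $\dot r_i$ that reaches $0$ from within $\mathcal{P}$ cannot decrease further, and $\mathcal{P}$ is positively invariant.

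The main obstacle is the rigorous invariance step rather than the algebra: one must argue that the pointwise subtangency inequality $\frac{d}{dt}V_{s,i}\ge 0$ on each active face truly forbids the trajectory from exiting $\mathcal{P}$, including the degenerate situation where several constraints are active simultaneously and the tangency is non-strict. This I would handle by invoking Nagumo's theorem (or, equivalently, a Bony--Brezis / first-exit-time contradiction argument) for the closed set $\cl(\mathcal{P})$, after checking that trajectories stay in the positive orthant so that the smooth expressions for $V_{s,i}$ and their sensitivities remain valid. Care is also needed to confirm $\bg{V}_L>\mathbbold{0}$ throughout $\mathcal{P}$, which follows from $\bg{Z}>0$ together with $\bg{h}\ge\mathbbold{0}$, $\bg{h}\ne\mathbbold{0}$.
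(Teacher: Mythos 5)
The paper does not prove this statement at all: it is imported verbatim as a known result, citing Lemma~3 of \cite{liu1989analysis}, so there is no in-paper proof to compare against. Your argument is correct and is essentially the classical one used in that reference: the sensitivity computation $\partial V_i/\partial r_j = (2b_{s,j}/r_j^3)V_jZ_{ij}>0$ is right (it follows from $\partial_{r_j}\bg{Z}=-\bg{Z}(\partial_{r_j}\bg{B}_{LL})\bg{Z}$ with $\partial_{r_j}\bg{B}_{LL}$ a rank-one diagonal perturbation, together with $\bg{Z}>0$ and $\bg{V}_L=\bg{Z}\bg{h}>\mathbbold{0}$), so the off-diagonal Jacobian entries of the vector field are strictly positive and the system is cooperative, and the vanishing of the diagonal term on the active face gives the subtangency inequality. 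The only place where your write-up hedges --- the degenerate case of several simultaneously active constraints and the rigor of the exit-time argument --- can be closed cleanly without Nagumo: set $w_i(t)=\dot r_i(t)=\frac{1}{T_i}\bigl(V_{s,i}(\bg{r}(t))-V_{0,i}\bigr)$ and observe that $\dot{\bg{w}}=\bg{A}(t)\bg{w}$ with $A_{ik}(t)=\frac{1}{T_i}\,\partial V_{s,i}/\partial r_k$ evaluated along the trajectory, a linear time-varying system whose matrix is Metzler by your computation; such systems preserve the nonnegative orthant, so $\bg{w}(0)\ge\mathbbold{0}$ implies $\bg{w}(t)\ge\mathbbold{0}$ for all $t$ in the interval of existence, which is exactly invariance of $\mathcal{P}$ and handles all active sets at once. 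Your remaining technical point is also handled correctly: since $\dot{\bg{r}}\ge\mathbbold{0}$ on $\mathcal{P}$, the trajectory satisfies $\bg{r}(t)\ge\bg{r}(0)>\mathbbold{0}$ and never approaches the boundary of the positive orthant, so the smoothness of $\bg{V}_s$ and the positivity of $\bg{V}_L$ persist along the flow.
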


\begin{thm}[\hspace{1sp}{\cite[Prop. 2]{liu1989analysis}}] \label{thm:alpha}
    Assume that the set $\mathcal{P}$ defined in \eqref{eq:P} is non-empty. Then $\mathcal{P}$ has a largest element $\bg{\alpha}$, i.e., $\bg{\alpha} \ge \bg{r}, \forall \bg{r} \in \mathcal{P}$. Furthermore, $\bg{\alpha} \in \mathcal{M}$.
\end{thm}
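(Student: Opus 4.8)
The plan is to recast the inequalities defining $\mathcal{P}$ as a monotone fixed-point problem and then extract $\bg{\alpha}$ by a monotone iteration. Define $T : \mathbb{R}^n_{>0} \to \mathbb{R}^n_{>0}$ componentwise by $T_i(\bg{r}) = V_i(\bg{r})/V_{0,i} = \bg{z}_i(\bg{r})^\top \bg{h}/V_{0,i}$, keeping in mind that $\bg{Z} = \bg{B}_{LL}^{-1}$ depends on $\bg{r}$ through the diagonal load terms $b_i = b_{s,i}/r_i^2$. Since $V_{s,i}(\bg{r}) = V_i(\bg{r})/r_i$, the condition $\dot r_i \ge 0$ is equivalent to $T_i(\bg{r}) \ge r_i$ and $\dot r_i = 0$ to $T_i(\bg{r}) = r_i$. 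Thus $\mathcal{P} = \{\bg{r} > \mathbbold{0} : T(\bg{r}) \ge \bg{r}\}$ and $\mathcal{M}$ is precisely the fixed-point set of $T$; it then remains to produce the greatest fixed point and show it dominates all of $\mathcal{P}$.

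The key structural step is monotonicity of $T$. If $\bg{r} \le \bg{r}'$, then $b_i = b_{s,i}/r_i^2 \ge b_{s,i}/r_i'^2$, so only the diagonal of $\bg{B}_{LL}$ decreases and hence $\bg{B}_{LL}(\bg{r}) \ge \bg{B}_{LL}(\bg{r}')$ entrywise (the off-diagonals are fixed). Both are nonsingular M-matrices with nonnegative inverses, so the inverse-monotonicity identity $\bg{Z}(\bg{r}') - \bg{Z}(\bg{r}) = \bg{Z}(\bg{r}')\bigl(\bg{B}_{LL}(\bg{r}) - \bg{B}_{LL}(\bg{r}')\bigr)\bg{Z}(\bg{r}) \ge \mathbbold{0}$ gives $\bg{Z}(\bg{r}) \le \bg{Z}(\bg{r}')$. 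Since $\bg{h} \ge \mathbbold{0}$, this yields $V_i(\bg{r}) \le V_i(\bg{r}')$ and therefore $T(\bg{r}) \le T(\bg{r}')$. I expect this to be the main obstacle: it is where the physics (higher taps draw less reactive power, raising voltages everywhere) is encoded, and it is the only place where the M-matrix hypothesis is genuinely used.

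Next I would bound $\mathcal{P}$ from above. Let $V_i^{\max} := (\tilde{\bg{B}}_{LL}^{-1}\bg{h})_i$ be the no-load voltage, which is well defined because connectivity together with the presence of at least one generator makes $-\tilde{\bg{B}}_{LL}$ a nonsingular M-matrix; monotonicity gives $\bg{Z}(\bg{r}) \le \tilde{\bg{B}}_{LL}^{-1}$ and hence $V_i(\bg{r}) \le V_i^{\max}$ for all $\bg{r} > \mathbbold{0}$. Consequently every $\bg{r} \in \mathcal{P}$ obeys $r_i \le V_i(\bg{r})/V_{0,i} \le V_i^{\max}/V_{0,i} =: \bar{r}_i$, so $\bg{\bar{r}}$ dominates $\mathcal{P}$, and in particular $T_i(\bg{\bar{r}}) \le \bar{r}_i$, i.e. $T(\bg{\bar{r}}) \le \bg{\bar{r}}$.

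Finally I would run the monotone iteration $\bg{r}^{(0)} = \bg{\bar{r}}$, $\bg{r}^{(k+1)} = T(\bg{r}^{(k)})$. From $T(\bg{\bar{r}}) \le \bg{\bar{r}}$ and monotonicity the sequence is non-increasing, while for any fixed $\bg{r} \in \mathcal{P}$ induction gives $\bg{r} \le \bg{r}^{(k)}$ (using $\bg{r} \le T(\bg{r}) \le T(\bg{r}^{(k-1)}) = \bg{r}^{(k)}$); as $\mathcal{P}$ is nonempty, this pins the sequence above a strictly positive vector. A bounded monotone sequence converges, say to $\bg{\alpha} > \mathbbold{0}$, and continuity of $T$ on $\mathbb{R}^n_{>0}$ (matrix inversion is continuous and $\bg{B}_{LL}(\bg{r})$ is invertible throughout) yields $\bg{\alpha} = T(\bg{\alpha})$, so $\bg{\alpha} \in \mathcal{M}$. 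Passing the induction bound to the limit gives $\bg{r} \le \bg{\alpha}$ for every $\bg{r} \in \mathcal{P}$, and since $\bg{\alpha} \in \mathcal{M} \subseteq \mathcal{P}$, the point $\bg{\alpha}$ is the largest element of $\mathcal{P}$, as claimed.
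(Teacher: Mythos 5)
Your proof is correct. Note that the paper itself offers no proof of this statement: it is imported verbatim as a known result from \cite{liu1989analysis} (Appendix~\ref{app:known}), so there is no in-paper argument to compare against, and your write-up is effectively supplying the missing derivation. The monotone fixed-point route you take is the natural one and every step checks out: the reformulation $\mathcal{P}=\{\bg{r}>\mathbbold{0}: T(\bg{r})\ge\bg{r}\}$ with $T_i(\bg{r})=V_i(\bg{r})/V_{0,i}$ is an exact restatement of $\dot r_i\ge 0$; the entrywise monotonicity of $\bg{Z}(\bg{r})=\bg{B}_{LL}(\bg{r})^{-1}$ via the identity $\bg{Z}(\bg{r}')-\bg{Z}(\bg{r})=\bg{Z}(\bg{r}')\bigl(\bg{B}_{LL}(\bg{r})-\bg{B}_{LL}(\bg{r}')\bigr)\bg{Z}(\bg{r})$ is valid because the difference is a nonnegative diagonal matrix and both inverses are positive; the no-load bound $\bar r_i=V_i^{\max}/V_{0,i}$ simultaneously dominates $\mathcal{P}$ and gives $T(\bar{\bg{r}})\le\bar{\bg{r}}$; and you correctly handle the one delicate point, namely that the decreasing iteration stays bounded below by an arbitrary fixed element of the (nonempty) $\mathcal{P}$, so the limit $\bg{\alpha}$ is strictly positive, $T$ is continuous there, and $\bg{\alpha}$ is a fixed point dominating all of $\mathcal{P}$. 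Two cosmetic remarks: the M-matrix inverse-positivity is also needed for $\tilde{\bg{B}}_{LL}^{-1}\ge\mathbbold{0}$ (not only in the monotonicity step), and your argument is consistent in spirit with the scalarized fixed-point map $f$ the paper later constructs in Appendix~\ref{app:uniqueeq}, whose monotonicity rests on the same positivity of $\bg{Z}$.
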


\begin{thm}[\hspace{1sp}{\cite[Prop. 3]{liu1989analysis}}]
    If the set 
    \begin{multline} \label{eq:Pa}
        \mathcal{P}_{\bg{\alpha}} = \{ \bg{r} :\; \bg{r} \in \mathcal{P} \setminus \{\bg{\alpha}\}, \text{ and there is no equilibrium } \bg{e} \\
        \text{with } \bg{r} \le \bg{e} < \bg{\alpha} \}      
    \end{multline}
    is non-empty then $\bg{\alpha}$ is asymptotically stable and the union
    \begin{align}
        \mathcal{A} = \bigcup_{\ubar{\bg{r}} \in \mathcal{P}_{\bg{\alpha}}} \{\bg{r} :\; \bg{r} \ge \ubar{\bg{r}} \}
    \end{align}
    is an ROA of $\bg{\alpha}$.
\end{thm}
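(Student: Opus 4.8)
The plan is to exploit the \emph{monotone} (cooperative) structure of the vector field together with the order-theoretic facts already in hand: $\mathcal{P}$ is forward invariant (Theorem~\ref{thm:invariant}), $\bg{\alpha}$ is its largest element and an equilibrium (Theorem~\ref{thm:alpha}), and every equilibrium lies in $\mathcal{P}$ since $\mathcal{M}\subseteq\mathcal{P}$. The driving idea is that order preservation along the flow lets me trap an arbitrary trajectory between two ``fence'' trajectories whose limits I can identify as $\bg{\alpha}$.

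First I would verify cooperativity. Differentiating $V_i(\bg{r})=\bg{z}_i^\top\bg{h}$ with $\bg{Z}=\bg{B}_{LL}^{-1}$ and $\partial b_j/\partial r_j=-2b_{s,j}/r_j^3$ gives $\partial V_i/\partial r_j=(2b_{s,j}/r_j^3)\,z_{ij}V_j$, so for $i\neq j$ the off-diagonal Jacobian entry $\partial\dot r_i/\partial r_j=(T_ir_i)^{-1}\partial V_i/\partial r_j>0$ because $\bg{Z}>0$ and $\bg{V}>\mathbbold{0}$. Hence \eqref{eq:dynamics} is cooperative on $\mathbb{R}^n_{>0}$, and since $\bg{Z}$ is \emph{fully} positive (the load subgraph is connected) the flow is strongly monotone: $\bg{r}\le\bg{r}'$ with $\bg{r}\neq\bg{r}'$ implies $\phi(t;\bg{r})\ll\phi(t;\bg{r}')$ for every $t>0$.

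Next, for a fixed $\ubar{\bg{r}}\in\mathcal{P}_{\bg{\alpha}}$ I establish the two fences. The lower fence starts at $\ubar{\bg{r}}$: since $\ubar{\bg{r}}\in\mathcal{P}$ and $\mathcal{P}$ is invariant, $\dot r_i\ge0$ holds along the orbit, so each coordinate is nondecreasing and bounded above by $\bg{\alpha}$; it converges to an equilibrium $\bg{e}$ with $\ubar{\bg{r}}\le\bg{e}\le\bg{\alpha}$, and the defining property of $\mathcal{P}_{\bg{\alpha}}$ (no equilibrium in $[\ubar{\bg{r}},\bg{\alpha})$) forces $\bg{e}=\bg{\alpha}$. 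The upper fence starts at any $\bg{r}^+$ chosen componentwise so large that $\bg{r}^+\ge\bg{\alpha}$ and $V_{s,i}(\bg{r}^+)<V_{0,i}$ for all $i$ (possible since $V_{s,i}=V_i/r_i\to0$ as the coordinates grow while $V_i$ stays bounded), so $\bg{r}^+$ lies in $\mathcal{N}:=\{\bg{r}:\dot r_i\le0\ \forall i\}$. The cooperative argument proving Theorem~\ref{thm:invariant} dualizes to show $\mathcal{N}$ is invariant, so this orbit is nonincreasing; it is bounded below by $\bg{\alpha}$ (monotonicity and $\phi(t;\bg{\alpha})=\bg{\alpha}$), hence converges to an equilibrium $\bg{e}^+\ge\bg{\alpha}$; but $\bg{e}^+\in\mathcal{M}\subseteq\mathcal{P}$ and $\bg{\alpha}$ is maximal in $\mathcal{P}$, so $\bg{e}^+=\bg{\alpha}$.

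Finally I run the squeeze. For arbitrary $\bg{r}\in\mathcal{A}(\ubar{\bg{r}})$ pick $\bg{r}^+$ as above with $\bg{r}^+\ge\bg{r}$; monotonicity gives $\phi(t;\ubar{\bg{r}})\le\phi(t;\bg{r})\le\phi(t;\bg{r}^+)$, and since both outer orbits tend to $\bg{\alpha}$, so does $\phi(t;\bg{r})$. Invariance of $\mathcal{A}(\ubar{\bg{r}})=\{\bg{r}\ge\ubar{\bg{r}}\}$ follows from $\phi(t;\bg{r})\ge\phi(t;\ubar{\bg{r}})\ge\ubar{\bg{r}}$, so each $\mathcal{A}(\ubar{\bg{r}})$ is an ROA and so is the union $\mathcal{A}$. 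For asymptotic stability I invoke strong monotonicity: $\bg{a}:=\phi(1;\ubar{\bg{r}})\ll\bg{\alpha}$ and $\bg{b}:=\phi(1;\bg{r}^+)\gg\bg{\alpha}$, so the order interval $[\bg{a},\bg{b}]$ is a genuine neighborhood of $\bg{\alpha}$ on which every trajectory is squeezed onto $\bg{\alpha}$, delivering both Lyapunov stability and local attraction. The main obstacle is the upper fence: unlike the lower one it leaves $\mathcal{P}$ and is not a priori bounded, and the crux is to trap it in the invariant region $\mathcal{N}$ and then use maximality of $\bg{\alpha}$ in $\mathcal{P}\supseteq\mathcal{M}$ to pin its limit to $\bg{\alpha}$.
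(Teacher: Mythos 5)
The paper does not actually prove this statement: it is imported verbatim from Liu--Vu (\cite[Prop.~3]{liu1989analysis}) and parked in the appendix of ``known results,'' so there is no in-paper argument to compare against. Judged on its own terms, your proof is correct and is essentially the classical monotone-dynamical-systems route that underlies the original reference: the Jacobian computation $\partial \dot r_i/\partial r_j = (T_i r_i)^{-1}(2b_{s,j}/r_j^3)Z_{ij}V_j>0$ for $i\neq j$ is right (using $\partial \bg{Z}/\partial r_j=-\bg{Z}(\partial\bg{B}_{LL}/\partial r_j)\bg{Z}$ and $\bg{Z}>0$ from the irreducible M-matrix structure), the lower fence from $\ubar{\bg{r}}$ converges upward to $\bg{\alpha}$ exactly because $\mathcal{P}_{\bg{\alpha}}$ forbids any intermediate equilibrium, and the upper fence works because $V_i(\bg{r})$ is increasing in $\bg{r}$ and bounded above by $E_i=(\tilde{\bg{B}}_{LL}^{-1}\bg{h})_i$, so $V_{s,i}\to 0$ and a decreasing trajectory from a large $\bg{r}^+\ge\bg{\alpha}$ is trapped in $[\bg{\alpha},\bg{r}^+]$ with limit forced to $\bg{\alpha}$ by maximality of $\bg{\alpha}$ in $\mathcal{P}\supseteq\mathcal{M}$. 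The only step I would ask you to tighten is the last one: attractivity of a neighborhood does not by itself yield Lyapunov stability, so you should say explicitly that the order intervals $[\phi(s;\ubar{\bg{r}}),\phi(s;\bg{r}^+)]$ form a nested family of positively invariant neighborhoods of $\bg{\alpha}$ (positively invariant because the lower fence is nondecreasing and the upper fence nonincreasing; neighborhoods because strong monotonicity gives the strict componentwise inequalities) that shrink to $\{\bg{\alpha}\}$ as $s\to\infty$; this nested family delivers stability, not merely attraction. With that sentence added, the argument is complete.
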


\begin{thm}[\hspace{1sp}{\cite[Prop. 4]{liu1989analysis}}] \label{thm:nonempty}
    If the Jacobian matrix of the dynamical system \eqref{eq:dynamics} is nonsingular at $\bg{\alpha}$ then the set $\mathcal{P}_{\bg{\alpha}}$ as defined by \eqref{eq:Pa} is nonempty.
\end{thm}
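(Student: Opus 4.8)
The plan is to combine the cooperative structure of the LTC dynamics \eqref{eq:dynamics} with the maximality of $\bg{\alpha}$ in $\mathcal{P}$, and then to exhibit an explicit element of $\mathcal{P}_{\bg{\alpha}}$ obtained by stepping away from $\bg{\alpha}$ along the Perron direction of the Jacobian. First I would establish that the Jacobian $\bg{J}(\bg{r}) := \partial\dot{\bg{r}}/\partial\bg{r}$ is an irreducible Metzler matrix at every $\bg{r} \in \mathbb{R}^n_{>0}$. Differentiating $\bg{Z} = \bg{B}_{LL}^{-1}$ yields $\partial\bg{Z}/\partial r_k = (2b_{s,k}/r_k^3)\,\bg{z}_k\bg{z}_k^\top$, so for $i \neq k$ the off-diagonal entry is $J_{ik} = (2b_{s,k})/(T_i r_i r_k^3)\,Z_{ik}\,(\bg{z}_k^\top\bg{h})$, which is strictly positive because $\bg{Z} > 0$ element-wise and $\bg{z}_k^\top\bg{h} = V_k > 0$. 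Hence $\bg{J}(\bg{\alpha})$ is irreducible Metzler, and Perron--Frobenius supplies a real spectral abscissa $\lambda$ that is a simple eigenvalue with a strictly positive eigenvector $\bg{v} > \mathbbold{0}$.

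Next I would fix the sign of $\lambda$ using maximality. Writing $[\bg{T}] := \diag(T_1,\ldots,T_n)$, and using that $\bg{V}_s - \bg{V}_0 = [\bg{T}]\dot{\bg{r}}$ vanishes at the equilibrium $\bg{\alpha}$, the first-order expansion reads $\bg{V}_s(\bg{\alpha}+\epsilon\bg{v}) = \bg{V}_0 + \epsilon\lambda[\bg{T}]\bg{v} + o(\epsilon)$. If $\lambda > 0$, then each component satisfies $V_{s,i}(\bg{\alpha}+\epsilon\bg{v}) > V_{0,i}$ for small $\epsilon > 0$, placing $\bg{\alpha}+\epsilon\bg{v} \in \mathcal{P}$ with $\bg{\alpha}+\epsilon\bg{v} > \bg{\alpha}$, which contradicts that $\bg{\alpha}$ is the largest element of $\mathcal{P}$ (Theorem~\ref{thm:alpha}). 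Since $\bg{J}(\bg{\alpha})$ is nonsingular by hypothesis we have $\lambda \neq 0$, and therefore $\lambda < 0$.

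Finally I would take $\ubar{\bg{r}} := \bg{\alpha} - \epsilon\bg{v}$ as the witness. Because $\lambda < 0$, the same expansion gives $V_{s,i}(\ubar{\bg{r}}) = V_{0,i} - \epsilon\lambda T_i v_i + o(\epsilon) > V_{0,i}$ for small $\epsilon > 0$, so $\ubar{\bg{r}} \in \mathcal{P}$ with $\ubar{\bg{r}} \in \mathbb{R}^n_{>0}$ and $\ubar{\bg{r}} \neq \bg{\alpha}$. Nonsingularity of $\bg{J}(\bg{\alpha})$ makes $\bg{\alpha}$ an isolated zero of $\bg{V}_s(\cdot)-\bg{V}_0$ by the inverse function theorem; shrinking $\epsilon$ so that the order interval $\{\bg{r}: \ubar{\bg{r}} \le \bg{r} \le \bg{\alpha}\}$ lies inside that isolating neighborhood, I conclude that no equilibrium $\bg{e}$ satisfies $\ubar{\bg{r}} \le \bg{e} < \bg{\alpha}$. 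Hence $\ubar{\bg{r}} \in \mathcal{P}_{\bg{\alpha}}$ and the set is nonempty.

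I expect the crux to be the second step: identifying the cooperative (Metzler) structure so that Perron--Frobenius delivers a \emph{strictly positive} eigenvector $\bg{v}$, and then upgrading maximality of $\bg{\alpha}$ into the strict inequality $\lambda < 0$. It is precisely here that the nonsingularity hypothesis is consumed, ruling out the degenerate case $\lambda = 0$ in which $\bg{\alpha}+\epsilon\bg{v}$ need not leave $\mathcal{P}$ at first order. Once the positive eigenvector and the sign of $\lambda$ are in hand, the construction of $\ubar{\bg{r}}$ and the isolation argument are routine.
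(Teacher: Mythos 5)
The paper does not actually prove this statement: Theorem~\ref{thm:nonempty} is listed in Appendix~\ref{app:known} among the ``known results'' and is imported verbatim from \cite[Prop.~4]{liu1989analysis}, so there is no in-paper proof to compare against. Your argument is, as far as I can check, a correct and self-contained proof. The key computation holds: since $\bg{B}_{LL}=\tilde{\bg{B}}_{LL}+[\bg{b}_s][\bg{r}]^{-2}$ depends on $r_k$ only through its $(k,k)$ entry, $\partial\bg{Z}/\partial r_k=(2b_{s,k}/r_k^3)\,\bg{z}_k\bg{z}_k^\top$ by symmetry of $\bg{Z}$, giving $J_{ik}=(2b_{s,k}/(T_i r_i r_k^3))Z_{ik}V_k>0$ for $i\ne k$ because $\bg{Z}>0$ and $V_k=\bg{z}_k^\top\bg{h}>0$ (this last strict inequality uses $\bg{h}\ge\mathbbold{0}$, $\bg{h}\ne\mathbbold{0}$, which holds for a connected network with at least one generator --- worth stating explicitly). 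Perron--Frobenius for irreducible Metzler matrices then gives the real dominant eigenvalue $\lambda$ with eigenvector $\bg{v}>\mathbbold{0}$; the contradiction with maximality of $\bg{\alpha}$ (Theorem~\ref{thm:alpha}) forces $\lambda\le 0$, nonsingularity upgrades this to $\lambda<0$, and the witness $\ubar{\bg{r}}=\bg{\alpha}-\epsilon\bg{v}$ lands in $\mathcal{P}$ at first order. The final isolation step is also sound: nonsingularity of the Jacobian makes $\bg{\alpha}$ an isolated equilibrium, and the order interval $\{\bg{r}:\ubar{\bg{r}}\le\bg{r}\le\bg{\alpha}\}=\prod_i[\alpha_i-\epsilon v_i,\alpha_i]$ collapses to $\{\bg{\alpha}\}$ as $\epsilon\to 0$, so for small $\epsilon$ no equilibrium $\bg{e}$ satisfies $\ubar{\bg{r}}\le\bg{e}<\bg{\alpha}$, i.e.\ $\ubar{\bg{r}}\in\mathcal{P}_{\bg{\alpha}}$. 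Note that nonsingularity is consumed twice --- once to exclude $\lambda=0$ and once for the isolation of $\bg{\alpha}$ --- not only in the eigenvalue step as your closing remark suggests; both uses are legitimate and both are needed.
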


\section{Supporting Lemmas}
We first present a slightly generalized version of the result concerning uniqueness of complex fixed point in \cite{Cui2019solvability}:
\begin{lem}[\hspace{1sp}{\cite[Thm. C.4]{Cui2019solvability}}] \label{thm:complexunique}
    Given $X = \Pi_{i=1}^n X_i$ where each $X_i$ is a non-empty simply connected open proper subset of $\mathbb{C}$. Let $f : \cl(X) \to X$ be a function holomorphic in $X$ and continuous on $\cl(X)$, and $\cl(f(X))$ is contained in $X$. Then $f$ has exactly one fixed point in $X$. Moreover, the sequence $\{ \bg{z}^n \}$ defined as $\bg{z}^{n+1} = f(\bg{z}^n), n = 0, 1, 2, \ldots$ converges to the unique fixed point $\bg{w}$ given any $\bg{z}^0 \in X$.
\end{lem}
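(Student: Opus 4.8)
The plan is to recognize Lemma~\ref{thm:complexunique} as an instance of the Earle--Hamilton fixed-point principle and to prove it by exhibiting $f$ as a strict contraction for the Kobayashi (equivalently, Poincar\'e) distance of $X$. First I would invoke the Riemann Mapping Theorem: each factor $X_i$ is a non-empty, simply connected, proper open subset of $\mathbb{C}$, hence biholomorphic to the unit disk $\mathbb{D}$, so the product $X = \prod_{i=1}^{n} X_i$ is biholomorphic to the polydisk $\mathbb{D}^n$. Consequently $X$ is a complete Kobayashi-hyperbolic manifold; write $d_X$ for its Kobayashi distance and $\kappa_X$ for the associated infinitesimal metric. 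The substantive content of the hypothesis ``$\cl(f(X))$ is contained in $X$'' is that $K := \cl(f(X))$ is a \emph{compact} subset of $X$, i.e.\ the image is relatively compact (automatic when the $X_i$ are bounded, and the condition that does the work in general); this is what converts the always-available non-expansiveness of holomorphic maps into a genuine contraction.

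The core of the argument is the contraction estimate, which I would establish in two steps. Fix an open set $U$ with $K \subset U \Subset X$. First, since $f(X) \subset K \subset U$, the distance-decreasing property of the Kobayashi metric applied to the holomorphic map $f\colon X \to U$ gives $d_U(f(\bg{z}),f(\bg{w})) \le d_X(\bg{z},\bg{w})$ for all $\bg{z},\bg{w} \in X$. Second, because $\cl(U) \subset X$ is compact, the two Kobayashi metrics are comparable on $U$ with a gain: there is a constant $c \in [0,1)$ with $\kappa_X \le c\,\kappa_U$ on $U$, and hence $d_X(\bg{p},\bg{q}) \le c\, d_U(\bg{p},\bg{q})$ for $\bg{p},\bg{q} \in U$. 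Strictness $c<1$ follows from compactness of $\cl(U)$ inside the open set $X$: at each point the extremal analytic disk for $X$ strictly dominates the one for $U$, and taking the supremum over the compact set $\cl(U)$ keeps $c<1$. Chaining the two steps and using $f(\bg{z}),f(\bg{w}) \in U$ yields $d_X(f(\bg{z}),f(\bg{w})) \le c\, d_U(f(\bg{z}),f(\bg{w})) \le c\, d_X(\bg{z},\bg{w})$, so $f$ is a $c$-contraction of $(X,d_X)$.

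It then remains only to invoke the Banach fixed-point theorem. Since $X \cong \mathbb{D}^n$, the metric space $(X,d_X)$ is complete, so the contraction $f$ has a unique fixed point $\bg{w} \in X$ and the iterates satisfy $d_X(\bg{z}^{n},\bg{w}) \le c^{\,n}\, d_X(\bg{z}^{0},\bg{w}) \to 0$ for every initial $\bg{z}^{0} \in X$. Because $d_X$ induces the Euclidean topology on $X$, this is exactly the asserted convergence $\bg{z}^n \to \bg{w}$ in $\mathbb{C}^n$. If one prefers to avoid appealing to completeness, one may instead note that $\bg{z}^n \in K$ for all $n \ge 1$, so the Cauchy estimate $d_X(\bg{z}^{n+1},\bg{z}^n) \le c^{\,n}\, d_X(\bg{z}^1,\bg{z}^0)$ forces the orbit to converge within the compact set $K \subset X$, and continuity of $f$ identifies the limit as the fixed point.

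I expect the main obstacle to be the uniform strict comparison $\kappa_X \le c\,\kappa_U$ with $c<1$; this is precisely where relative compactness $\cl(f(X)) \Subset X$ is indispensable, and the contraction can fail for merely ``interior'' images when $X$ is unbounded. I also note that continuity of $f$ on $\cl(X)$ is not needed for existence, uniqueness, or convergence inside $X$, since all orbits remain in $X$ after the first step, so that hypothesis is presumably retained only to match the intended application.
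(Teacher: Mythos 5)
The paper never proves this lemma: it is imported (as a ``slightly generalized version'') from \cite[Thm.~C.4]{Cui2019solvability} and used as a black box, so there is no in-paper argument to compare against. Your Earle--Hamilton route --- Riemann-map each factor onto $\mathbb{D}$ so that $X \cong \mathbb{D}^n$, show $f$ is a strict contraction for the Kobayashi distance because its image is relatively compact, then invoke Banach (or keep the orbit inside the compact set $K$) --- is the standard proof of statements of this type and is essentially sound. Two small points: the uniform strict comparison $\kappa_X \le c\,\kappa_U$ with $c<1$ needs the (semi)continuity of the Kobayashi metric to pass from pointwise to uniform, which is cleanest if you just work in $\mathbb{D}^n$ where $K$ sits inside a closed polydisk of radius $r<1$ and the Poincar\'e metrics of $\mathbb{D}$ and $\mathbb{D}_s$ ($r<s<1$) can be compared explicitly; and your remark that continuity on $\cl(X)$ is not needed is correct.

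The one substantive issue is the step where you declare that ``$\cl(f(X))$ is contained in $X$'' \emph{means} $K=\cl(f(X))$ is compact. That is an added hypothesis, not a reinterpretation, and it is exactly where the statement as printed breaks: for unbounded factors the lemma is false as written. Take $n=1$, $X=\{z\in\mathbb{C}:\re(z)>0\}$ (non-empty, simply connected, open, proper) and $f(z)=z+1$; then $f$ is holomorphic, maps $\cl(X)$ into $X$, and $\cl(f(X))=\{z:\re(z)\ge 1\}\subset X$, yet $f$ has no fixed point. So either ``bounded'' must be restored in the hypotheses on the $X_i$, or the hypothesis must be strengthened to ``$\cl(f(X))$ is a compact subset of $X$''; your proof is correct under either fix. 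This does not damage the paper, because the only place the lemma is invoked (the proof of Lemma~\ref{thm:uniquelem}) applies it to bounded open rectangles $Y_i$, where $f(\cl(Y))$ is automatically compact and contained in $Y$ --- but the ``slight generalization'' to possibly unbounded $X_i$ should be retracted or qualified.
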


\begin{lem} \label{thm:uniquelem}
    Given $X = \Pi_{i=1}^n X_i$ where $X_i = (a_i,b_i) \subset \mathbb{R}_{>0}$. Let parameters $c_i, d_{ik} \in \mathbb{R}_{>0}$ for all $i,k \in \{ 1, \ldots, n\}$. Then the mapping $f:\; \cl(X) \to X$ defined as
    \begin{align}
        f_i(\bg{x}) = c_i - \sum_{k=1}^n \frac{d_{ik}}{x_k}, \quad \forall i \in \{1, \ldots, n\}
    \end{align}
    has a unique fixed point in $X$.
\end{lem}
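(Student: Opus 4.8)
The plan is to reduce the statement to the holomorphic fixed-point result of Lemma~\ref{thm:complexunique}. The difficulty is that the intervals $X_i=(a_i,b_i)$ are not open in $\mathbb{C}$, so I first replace each $X_i$ by a genuine complex domain on which $f$ extends holomorphically. A convenient choice is the open right half-plane $\tilde{X}_i=\{z\in\mathbb{C}:\re(z)>\gamma_i\}$ for thresholds $\gamma_i$ to be fixed below; each such set is non-empty, open, convex (hence simply connected), and a proper subset of $\mathbb{C}$, so it is admissible for Lemma~\ref{thm:complexunique}. Since $a_i>0$, I will take $0<\gamma_i<a_i$, which both guarantees $(a_i,b_i)\subset\tilde{X}_i$ and $\cl(X)\subset\tilde{X}:=\prod_i\tilde{X}_i$, and keeps the domain away from the pole of $1/z$ at the origin, so that $f$ is holomorphic on $\tilde{X}$ and continuous on $\cl(\tilde{X})$.

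The crux is the strict self-map condition $\cl(f(\tilde{X}))\subset\tilde{X}$. Writing $z_k=u_k+jv_k$ with $u_k>\gamma_k>0$, the elementary bound $\re(1/z_k)=u_k/(u_k^2+v_k^2)\le 1/u_k<1/\gamma_k$ yields $\re(f_i(\bg{z}))=c_i-\sum_k d_{ik}\re(1/z_k)>c_i-\sum_k d_{ik}/\gamma_k=:\delta_i$ for every $\bg{z}\in\tilde{X}$; moreover $1/z_k$ maps $\tilde{X}_k$ into the bounded disk tangent to the imaginary axis at the origin, so $f(\tilde{X})$ is bounded and $\cl(f(\tilde{X}))$ is compact. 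It therefore suffices to choose the $\gamma_i$ so that $\delta_i>\gamma_i$ for all $i$, i.e.\ $c_i-\sum_k d_{ik}/\gamma_k>\gamma_i$. Here I invoke the hypothesis built into the statement that $f$ maps $\cl(X)$ into $X$: since $f_i$ is increasing in each coordinate, its minimum over the box $\cl(X)$ is attained at the corner $\bg{a}=(a_1,\dots,a_n)$, whence $c_i-\sum_k d_{ik}/a_k=f_i(\bg{a})>a_i$ for all $i$. These are precisely the required inequalities at $\gamma=\bg{a}$, with strict sign, so by continuity they persist for $\gamma_i=a_i-\varepsilon$ with $\varepsilon>0$ small enough. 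With this choice, $\re(f_i)\ge\delta_i>\gamma_i$ on all of $\cl(\tilde{X})$, which establishes both $f:\cl(\tilde{X})\to\tilde{X}$ and $\cl(f(\tilde{X}))\subset\tilde{X}$.

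Lemma~\ref{thm:complexunique} then applies and produces a unique fixed point $\bg{w}\in\tilde{X}$, to which the Picard iteration $\bg{z}^{n+1}=f(\bg{z}^n)$ converges from any $\bg{z}^0\in\tilde{X}$. Two short arguments transfer this to the real box. For uniqueness: since $X\subset\tilde{X}$, any fixed point lying in $X$ is also a fixed point in $\tilde{X}$ and hence equals $\bg{w}$, so $f$ has at most one fixed point in $X$. For existence inside $X$: I run the iteration from a point $\bg{z}^0\in\cl(X)\subset\tilde{X}$; because $f(\cl(X))\subset X\subset\cl(X)$, all iterates remain in the closed set $\cl(X)$, so their limit $\bg{w}$ lies in $\cl(X)$, and then $\bg{w}=f(\bg{w})\in X$. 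Hence $\bg{w}$ is the unique fixed point of $f$ in $X$. I expect the self-map verification of the second paragraph---in particular isolating admissible thresholds $\gamma_i$ from the assumed mapping property of $f$---to be the main obstacle, whereas the holomorphicity checks and the complex-to-real transfer are routine.
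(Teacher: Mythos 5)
Your proof is correct, and it rests on the same key ingredient as the paper's own proof --- Lemma~\ref{thm:complexunique} applied to a complex extension of $f$ --- but it differs in two ways worth recording. First, for the complex domain the paper uses bounded rectangles $Y_i$ whose real parts are confined to $(a_i,b_i)$ and whose imaginary parts are bounded by $\pm\bigl(\epsilon+\sum_k d_{ik}/a_k\bigr)$, whereas you use unbounded half-planes $\re(z)>\gamma_i$ with $\gamma_i=a_i-\varepsilon$. Your choice is arguably cleaner: the self-map condition collapses to the single family of inequalities $c_i-\sum_k d_{ik}/\gamma_k>\gamma_i$, which you correctly extract from the hypothesis $f(\cl(X))\subseteq X$ evaluated at the corner $\bg{a}$ (where the coordinatewise-increasing $f_i$ attains its minimum over the box) and then perturb by continuity; with the paper's rectangles one must in principle also verify the upper bound $\re(f_i(\bg{y}))<b_i$ over $\cl(Y)$, a point the published proof does not address and which your half-planes sidestep entirely. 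Second, for existence the paper invokes Brouwer's fixed-point theorem on the compact convex set $\cl(X)$, whereas you get existence in $X$ from the Picard-iteration clause of Lemma~\ref{thm:complexunique}: starting the iteration at a point of $\cl(X)\subset\tilde{X}$, the inclusion $f(\cl(X))\subseteq X\subseteq\cl(X)$ traps every iterate in the closed set $\cl(X)$, so the limit $\bg{w}$ lies in $\cl(X)$ and then $\bg{w}=f(\bg{w})\in X$. Both routes are sound; yours dispenses with one external theorem at the cost of leaning on the convergence half of Lemma~\ref{thm:complexunique}, and your self-map verification is, if anything, tighter than the paper's.
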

\begin{proof}
    Since $\cl(X)$ is a convex and compact subset of $\mathbb{R}^n$ and $f(\bg{x})$ is continuous on $\cl(X)$, $f(\bg{x})$ has a fixed point in $\cl(X)$ by Brouwer fixed-point theorem \cite[6.3.2]{ortega1970iterative}. Furthermore, since $f(\cl(X)) \subseteq X$, all the fixed points lie in $X$. 
    
    Define 
    \begin{multline}
        Y_i = \bigg\{ y \in \mathbb{C} :\; a_i < \re(y) < b_i, \\
        -\epsilon - \sum_{k=1}^n \frac{d_{ik}}{a_k} < \im(y) < \epsilon + \sum_{k=1}^n \frac{d_{ik}}{a_k} \bigg\},
    \end{multline}
    where $\epsilon$ is some given positive number. The mapping $f$ is holomorphic on $\cl(Y)$ where $Y = \Pi_{i=1}^n Y_i$. In addition, we claim that $f(\cl(Y)) \subseteq Y$. To see this, note that for $\bg{y} \in \cl(Y)$, we have
    \begin{align}
        \re(f_i(\bg{y})) &= \re\left( c_i - \sum_{k=1}^n \frac{d_{ik}}{y_k} \right)
        = c_i - \sum_{k=1}^n \re\left( \frac{d_{ik}}{y_k} \right) \nonumber\\
        &\ge c_i - \sum_{k=1}^n  \frac{d_{ik}}{|y_k|}
        \ge c_i - \sum_{k=1}^n  \frac{d_{ik}}{a_k}
        > a_i
    \end{align}
    and 
    \begin{align}
        \left| \im(f_i(\bg{y})) \right| &= 
        \left| \im\left( c_i - \sum_{k=1}^n \frac{d_{ik}}{y_k} \right) \right|
        = \left| \sum_{k=1}^n \im\left( \frac{d_{ik}}{y_k} \right) \right| \nonumber\\
        &\le \sum_{k=1}^n \left| \im\left( \frac{d_{ik}}{y_k} \right)\right|
        \le \sum_{k=1}^n \left| \frac{d_{ik}}{y_k} \right|
        \le \sum_{k=1}^n \frac{d_{ik}}{a_k} \nonumber\\
        &< \epsilon + \sum_{k=1}^n \frac{d_{ik}}{a_k}.
    \end{align}
    Since $Y_i, i = 1, \ldots, n$ are simply connected (since they are convex) open proper subsets of $\mathbb{C}$, Lemma \ref{thm:complexunique} ensures that there exists a unique fixed point in $Y$. 
    
    Since $X \subset Y$ and there is at least one fixed point in $X$, there is exactly one fixed point in $X$.
\end{proof}

\begin{lem} \label{thm:connectd}
    Suppose $\{\bg{\alpha}\}$ is a proper subset of $\mathcal{P}$, then for any $\bg{r} \in \mathcal{P}$ and any open proper superset $\mathcal{B}$ of $\bg{r}$, we have $\{\bg{r}\} \subset \mathcal{B} \cap \mathcal{P}$.
\end{lem}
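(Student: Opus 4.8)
The statement asserts that every point of $\mathcal P$ is an accumulation point of $\mathcal P$; equivalently, once $\mathcal P$ contains more than the single element $\bg\alpha$ (which is exactly what the premise $\{\bg\alpha\}\subsetneq\mathcal P$ guarantees), $\mathcal P$ has no isolated points. The plan is to fix $\bg r\in\mathcal P$ together with an open set $\mathcal B\ni\bg r$ and to exhibit a point of $\mathcal P$ lying in $\mathcal B\setminus\{\bg r\}$, treating separately the case in which $\bg r$ is an equilibrium and the case in which it is not. The non-equilibrium case $\bg r\notin\mathcal M$ is immediate: there $\dot{\bg r}(\bg r)\neq\mathbbold 0$, so the solution $\phi(t;\bg r)$ of \eqref{eq:dynamics} satisfies $\phi(0;\bg r)=\bg r$ with nonzero initial velocity, whence $\phi(t;\bg r)\neq\bg r$ for all small $t>0$ while $\phi(t;\bg r)\to\bg r$ as $t\to 0^+$. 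Since $\mathcal P$ is forward invariant (Theorem \ref{thm:invariant}), $\phi(t;\bg r)\in\mathcal P$, and for $t$ small enough it lands in $\mathcal B\setminus\{\bg r\}$, as required.

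The substantive case is $\bg r=\bg r^\ast\in\mathcal M$, where the flow is frozen and nearby feasible points must be produced by a different mechanism. The idea is to perturb along a direction read off from the Jacobian $\bg J:=\partial\bg V_s/\partial\bg r$ evaluated at $\bg r^\ast$. Differentiating \eqref{eq:V=zh} and using $\partial\bg Z/\partial r_j=\tfrac{2b_{s,j}}{r_j^3}\,\bg z_j\bg z_j^\top$ (which follows from $\bg Z=\bg B_{LL}^{-1}$ together with the fact that varying $r_j$ alters only the $j$th diagonal load susceptance, plus symmetry of $\bg Z$), a direct computation gives $\partial V_{s,i}/\partial r_j=\tfrac{2b_{s,j}}{r_i r_j^3}Z_{ij}V_j$ for $i\neq j$. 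Because $\bg Z>0$ element-wise and $\bg V_L>\mathbbold 0$, every off-diagonal entry of $\bg J$ is strictly positive, so $\bg J$ is an irreducible Metzler matrix. Perron--Frobenius theory then furnishes a real dominant eigenvalue $\lambda$ with a strictly positive eigenvector $\bg v\gg\mathbbold 0$. Setting $\bg\delta:=\mathrm{sgn}(\lambda)\,\bg v$ yields $\bg J\bg\delta=|\lambda|\,\bg v\gg\mathbbold 0$, i.e.\ the directional derivative of each $V_{s,i}$ along $\bg\delta$ is strictly positive; since $V_{s,i}(\bg r^\ast)=V_{0,i}$, the point $\bg r^\ast+s\bg\delta$ obeys $V_{s,i}(\bg r^\ast+s\bg\delta)>V_{0,i}$ for every $i$ and all small $s>0$. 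Such a point lies in $\inte(\mathcal P)$, is distinct from $\bg r^\ast$, and lies in $\mathcal B$ once $s$ is small, closing this case.

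The main obstacle is the degenerate sub-case $\lambda=0$, in which $\bg J$ is singular and the first-order construction collapses. I would exclude it by invoking Assumption \ref{assum:isolated}: the genericity it encodes makes the Jacobian of \eqref{eq:dynamics}, which differs from $\bg J$ only by the positive diagonal scaling $\diag(1/T_i)$, nonsingular at every equilibrium; since $\lambda$ is itself an eigenvalue of $\bg J$, nonsingularity forces $\lambda\neq0$. Two minor points deserve mention. First, the upgrade from a first-order feasible direction to genuine membership in $\mathcal P$ is automatic here, because $\bg J\bg\delta\gg\mathbbold 0$ is a strict inequality that survives the $o(s)$ remainder, so no second-order analysis is needed. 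Second, the whole argument may alternatively be organized as a proof that $\mathcal P$ is connected, since a connected set with at least two points can have no isolated points; the cooperative (monotone) structure exposed by the strictly positive off-diagonals of $\bg J$ is precisely the feature driving both formulations, and the premise $\{\bg\alpha\}\subsetneq\mathcal P$ serves to rule out the trivial singleton case in which the conclusion would be vacuously unattainable.
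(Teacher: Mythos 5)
Your split into $\bg{r}\notin\mathcal{M}$ (push along the flow, using invariance of $\mathcal{P}$) and $\bg{r}\in\mathcal{M}$ (perturb along a Perron direction of $\bg{J}=\partial\bg{V}_s/\partial\bg{r}$) is a genuinely different route from the paper's, and most of it checks out: the Jacobian computation, the strict positivity of the off-diagonal entries, and the choice $\bg{\delta}=\mathrm{sgn}(\lambda)\bg{v}$ are all correct when $\lambda\neq 0$. The genuine gap is exactly the sub-case you flagged, $\lambda=0$, and the repair you propose does not work. Assumption \ref{assum:isolated} only says that $\mathcal{M}$ is discrete, and discreteness of a solution set does not imply nonsingularity of the Jacobian at its points: at a saddle-node, where two power flow solutions coalesce (precisely the loadability-limit situation this paper is concerned with), the resulting equilibrium is isolated yet degenerate. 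The paper itself treats Jacobian nonsingularity at $\bg{\alpha}$ as a \emph{separate} hypothesis (Theorem \ref{thm:nonempty}) rather than a consequence of Assumption \ref{assum:isolated}. Nor is the degenerate case a removable technicality for your method: if $\lambda=0$, the left Perron vector $\bg{w}\gg\mathbbold{0}$ of the irreducible Metzler matrix $\bg{J}$ satisfies $\bg{w}^\top\bg{J}=\mathbbold{0}^\top$, hence $\bg{w}^\top\bg{J}\bg{\delta}=0$ for every $\bg{\delta}$ and no direction with $\bg{J}\bg{\delta}\gg\mathbbold{0}$ can exist; the first-order construction collapses outright and cannot be patched without higher-order or global information.

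The paper sidesteps all of this by taking the route you mention only in passing: it shows $\mathcal{P}$ is path-connected by passing to the variables $(\bg{V},\bg{u})$ with $\bg{u}=[\bg{r}]^{-2}\bg{V}$, in which the defining conditions become the convex set $\mathcal{S}$ (a linear network equation together with $[\bg{u}]\bg{V}\ge[\bg{V}_0]\bg{V}_0$), and then maps segments of $\mathcal{S}$ back into $\mathcal{P}$ via the continuous map $(\bg{V},\bg{u})\mapsto\sqrt{[\bg{u}]^{-1}\bg{V}}$. A connected set with at least two points (this is where the hypothesis $\{\bg{\alpha}\}\subsetneq\mathcal{P}$ is actually used) has no isolated points, so no nondegeneracy is needed anywhere. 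To salvage your argument you would have to either add Jacobian nonsingularity at every equilibrium as an explicit standing assumption, or handle the degenerate equilibria by the convexity argument anyway, at which point the case analysis becomes superfluous.
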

\begin{proof}
    It suffices to show $\mathcal{P}$ is connected. To see this, suppose an open proper superset $\mathcal{B}$ of $\bg{r}$ exists such that $\mathcal{B} \cap \mathcal{P} = \{\bg{r}\}$, it then follows that $\{\bg{r}\}$ and $\mathcal{P} \setminus \{\bg{r}\}$ are separated, a contradiction. 
    
    Given $\bg{s}, \bg{t} \in \mathcal{P}$, we want to show the existence of a path from $\bg{s}$ to $\bg{t}$ defined by a continuous mapping $f:[0,1] \to \mathcal{P}$ with $f(0) = \bg{s}$ and $f(1) = \bg{t}$. Define the convex set 
    \begin{align}
        \mathcal{S}\! =\! \left\{ (\bg{V},\bg{u}):\; \bg{B}_{LL} \bg{V} + [\bg{b}_s]\bg{u} = \bg{h}, [\bg{u}]\bg{V} \ge [\bg{V}_0]\bg{V}_0 \right\}.
    \end{align}
    Let $\bg{V}_s$ and $\bg{V}_t$ be the load voltages corresponding to $\bg{s}$ and $\bg{t}$, respectively. Then it is easy to verify that both $(\bg{V}_s, [\bg{s}]^{-2}\bg{V}_s)$ and $(\bg{V}_t, [\bg{t}]^{-2}\bg{V}_t)$ are in $\mathcal{S}$. In addition, $\sqrt{[\bg{u}]^{-1}\bg{V}} \in \mathcal{P}$ for any $(\bg{V},\bg{u}) \in \mathcal{S}$. 
    Since $\mathcal{S}$ is convex, $(\bg{V}(\lambda),\! \bg{u}(\lambda))\!:=\! \left( (1-\lambda)\bg{V}_s \!+\! \lambda \bg{V}_t, (1-\lambda)[\bg{s}]^{-2}\bg{V}_s + \lambda [\bg{t}]^{-2}\bg{V}_t \right) \in \mathcal{S}$ for $0 \le \lambda \le 1$. The function $f(\lambda)\! =\! \sqrt{[\bg{u}(\lambda)]^{-1}\bg{V}(\lambda)}$ is as desired.
\end{proof}
    

\section{Proof of Lemma \ref{thm:uniqueeq}} \label{app:uniqueeq}
\begin{proof}
    We left multiply $[\bg{V}_L]^{-1}$ on both sides of \eqref{eq:pf} ($[\bg{V}_L]^{-1}$ is well-defined since $\bg{r}, \bg{V}_0 > \mathbbold{0}$), move $\bg{B}_{LG}\bg{V}_G$ to the right, and left multiply $\bg{Z}$ on both sides to yield
    \begin{align} \label{eq:fppf:Vp}
        V_i(\bg{r}) = E_i - \sum_{k \in \mathcal{V}_L} Z_{ik} \frac{Q_k(\bg{r})}{V_k(\bg{r})}, \quad \forall i \in \mathcal{V}_L,
    \end{align}
    where both $\bg{E} = -\tilde{\bg{B}}_{LL}^{-1}\bg{B}_{LG}\bg{V}_G$ and $\bg{Z} = \tilde{\bg{B}}_{LL}^{-1}$ are independent of $\bg{r}$. By replacing $\bg{V}_L$ and $\bg{Q}_L$ with $[\bg{r}]\bg{V}_s$ and $[\bg{V}_s]^2\bg{b}_s$, \eqref{eq:fppf:Vp} can be rewritten as
    \begin{align} \label{eq:fppf:intmed}
        V_{s,i}(\bg{r})r_i = E_i - \sum_{k \in \mathcal{V}_L} Z_{ik} \frac{V_{s,k}(\bg{r})b_k}{r_k}, \quad \forall i \in \mathcal{V}_L.
    \end{align}
    Dividing both sides by $V_{s,i}$, we get a fixed-point form of $\bg{r}$:
    \begin{align} \label{eq:fppf:r}
        r_i = \frac{1}{V_{s,i}(\bg{r})} \left( E_i - \sum_{k \in \mathcal{V}_L} Z_{ik} \frac{V_{s,k}(\bg{r})b_k}{r_k} \right), \quad \forall i \in \mathcal{V}_L.
    \end{align}
    Note that any $\bg{r} \in \mathbb{R}_{>0}^n$ is a fixed point of \eqref{eq:fppf:r}. Define the function $f(\bg{r}) :\; \mathbb{R}^n_{>0} \to \mathbb{R}^n$ as
    \begin{align}
        f_i(\bg{r}) := \frac{1}{V_{0,i}} \left( E_i - \sum_{k \in \mathcal{V}_L} Z_{ik} \frac{V_{0,k} b_k}{r_k} \right), \quad \forall i \in \mathcal{V}_L
    \end{align}
    then the fixed-point mapping \eqref{eq:fppf:r}, when $\bg{V}_s(\bg{r}) = \bg{V}_0$, can be rewritten in a compact manner as
    \begin{align} \label{eq:fppf:compact}
        r_i = f_i(\bg{r}), \quad \forall i \in \mathcal{V}_L.
    \end{align}
    The fixed points of \eqref{eq:fppf:compact} correspond to the equilibria of \eqref{eq:dynamics}, and our goal is to show that there is a unique fixed point to \eqref{eq:fppf:compact} that lies in $\mathcal{A}(\bg{r})$ for any $\bg{r} \in \mathcal{P}$.
    
    Let $\ubar{\bg{r}} \in \mathcal{P}$ be given. If $\ubar{\bg{r}} \notin \mathcal{M}$, there exists some $\ell \in \mathcal{V}_L$ such that $V_{s,\ell}(\ubar{\bg{r}}) > V_{0,\ell}$. The RHS of \eqref{eq:fppf:r} is strictly decreasing with respect to $\bg{V}_s$ since 1) $1/V_{s,i}$ strictly decreases with $V_{s,i}$; 2) $E_i - \sum_{k \in \mathcal{V}_L} \left( Z_{ik} V_{s,k}(\bg{r}) b_k/r_k \right)$ strictly decreases with $\bg{V}_s$ since each $Z_{ik} b_k/r_k$ term is strictly positive; 3) both $1/V_{s,i}$ and $E_i - \sum_{k \in \mathcal{V}_L} \left( Z_{ik} V_{s,k}(\bg{r}) b_k/r_k \right)$ are positive. Since $\bg{V}_s(\ubar{\bg{r}}) \ge \bg{V}_0$ with at least one strict inequality and $\ubar{\bg{r}}$ is a fixed point of \eqref{eq:fppf:r}, we have
    \begin{align}
        \ubar{r}_i < \frac{1}{V_{0,i}} \left( E_i - \sum_{k \in \mathcal{V}_L} Z_{ik} \frac{V_{0,k} b_k}{\ubar{r}_k} \right), \quad \forall i \in \mathcal{V}_L.
    \end{align}
    We define the following set 
    \begin{align}
        \mathcal{I}(\bg{r}) = \left\{ \bg{x} \in \mathbb{R}^n_{> 0} :\; \ubar{r}_i < x_i < r_i, \; \forall i \in \mathcal{V}_L \right\}.
    \end{align}
    The facts that $f_i(\bg{r})$ is a strictly increasing function in $\mathbb{R}_{>0}^n$ which is upper bounded by $E_i/V_{0,i}$ and $\ubar{r}_i < f_i(\ubar{\bg{r}})$ imply $\ubar{r}_i < f_i(\bg{r}) < E_i/V_{0,i}$ as long as $\bg{r} \ge \ubar{\bg{r}}$, which further implies $\ubar{r}_i < f_i(\bg{r}) < \bar{r}_i$ as long as $\bar{\bg{r}} \ge [\bg{V}_0]^{-1}\bg{E}$. In other words, we have $f(\cl(\mathcal{I}(\bg{r}))) \subset \mathcal{I}(\bg{r})$ as long as $\bg{r} \ge [\bg{V}_0]^{-1}\bg{E}$. Lemma \ref{thm:uniquelem} can then be invoked which ensures that there is a unique fixed point in $\mathcal{I}(\bg{r})$ for all $\bg{r} \ge [\bg{V}_0]^{-1}\bg{E}$. This shows there is a unique fixed point in $\inte(\mathcal{A}(\ubar{\bg{r}}))$. In addition, there can not be any equilibrium on $\partial \mathcal{A}(\ubar{\bg{r}})$ since $f$ is increasing in $\mathbb{R}^n_{>0}$ and $\ubar{\bg{r}} < f(\ubar{\bg{r}})$. Therefore, there is a unique equilibrium in $\mathcal{A}(\ubar{\bg{r}})$.
    
    Now we assume $\ubar{\bg{r}} \in \mathcal{M}$. Based on similar argument as above, there is no equilibrium in $\partial \mathcal{A}(\ubar{\bg{r}})$, so the distance $d = \min_{\bg{r} \in \mathcal{M} \setminus\{\ubar{\bg{r}}\}} \min_{i \in \{1,\ldots,n\}} |r_i - \ubar{r}_i |$ is positive. Lemma \ref{thm:connectd} ensures that for any given $\epsilon \in (0,d)$, there exists a point $\tilde{\bg{r}}$ distinct from $\ubar{\bg{r}}$ such that $\tilde{\bg{r}} \in \mathcal{P} \cap \mathcal{B}_\infty(\ubar{\bg{r}},\epsilon)$. Leveraging the result for the case when $\ubar{\bg{r}} \notin \mathcal{M}$ above, we know there is a unique equilibrium in $\mathcal{A}(\tilde{\bg{r}})$. To show there is a unique equilibrium in $\mathcal{A}(\ubar{\bg{r}})$ other than $\ubar{\bg{r}}$, it only remains to show there are no equilibrium in the set $\mathcal{A}(\ubar{\bg{r}}) \setminus \mathcal{A}(\tilde{\bg{r}})$ other than $\ubar{\bg{r}}$, where the set can be represented as
    \begin{align}
        \mathcal{A}(\ubar{\bg{r}}) \setminus \mathcal{A}(\tilde{\bg{r}}) = \{ \bg{r} \ge \ubar{\bg{r}} :\; r_i < \tilde{r}_i \text{ for some } i \in \mathcal{V}_L \}.
    \end{align}
    Suppose the set is non-empty, then for every $\bg{r} \in \mathcal{A}(\ubar{\bg{r}}) \setminus \mathcal{A}(\tilde{\bg{r}})$ distinct from $\ubar{\bg{r}}$, there is an index $i$ such that $\ubar{r}_i \le r_i < \tilde{r}_i$, which means $|r_i - \ubar{r}_i| < |\tilde{r}_i - \ubar{r}_i| < \epsilon < d$, so $\bg{r} \notin \mathcal{M}$.
\end{proof}

\section{Proof of Theorem \ref{thm:eq_unstable}} \label{app:eq_unstable}
\begin{proof}
    
    Take an equilibrium $\bg{r}^* \in \mathcal{M}$ distinct from $\bg{\alpha}$. Since the points in $\mathcal{M}$ are finite and isolated, the minimum distance between $\bg{r}^*$ and other equilibria is positive. Denote the minimum distance by $d := \min_{\bg{r} \in \mathcal{M} \setminus \{\bg{r}^*\}} \| \bg{r}^* - \bg{r} \|_2$, then we know from Lemma \ref{thm:connectd} that for any open ball $\mathcal{B}_2(\bg{r}^*, \epsilon)$ with $\epsilon \in (0,d)$, there is a point $\tilde{\bg{r}} \in \mathcal{B}_2(\bg{r}^*, \epsilon) \cap \mathcal{P}$ which is not an equilibrium. 
    We claim that $\tilde{r}_i > r^*_i$ for at least one $i$. Assume on the contrary that $\tilde{\bg{r}} \le \bg{r}^*$, then it follows from Lemma \ref{thm:uniqueeq} that there is a unique equilibrium in $\mathcal{A}(\tilde{\bg{r}})$, contradicting the fact that both $\bg{r}^*$ and $\bg{\alpha}$ are in $\mathcal{A}(\tilde{\bg{r}})$, so the claim is proven. Since $\mathcal{P}$ is invariant (Theorem \ref{thm:invariant}), bounded above by $\bg{\alpha}$ (Theorem \ref{thm:alpha}), and $\dot{\bg{r}} \ge \mathbbold{0}$ for $\bg{r} \in \mathcal{P}$, the trajectory through $\tilde{\bg{r}}$ converges to an equilibrium. Since $\tilde{r}_i > r^*_i$ for at least one $i$, it converges increasingly to some equilibrium other than $\bg{r}^*$, with distance at least $d$ from $\bg{r}^*$. Since this holds for any $\epsilon \in (0,d)$, the equilibrium $\bg{r}^*$ is unstable.
\end{proof}

\section{Additional Simulation Results}
\label{app:tap}

\begin{table}[!h]
\renewcommand{\arraystretch}{1.1}
\caption{Tap Ratios of the Point in Figure \ref{fig:roa}}
\label{tb:tap}
\centering
\begin{tabular}{ccccccccc}
\toprule
Bus No. & 1 & 3 & 4 & 7 & 8 & 9 & 12 \\
\hline
Tap ratio & 0.83 & 0.64 & 0.45 & 0.36 & 0.50 & 0.50 & 0.38 \\
\hline
Bus No. & 15 & 16 & 18 & 20 & 21 & 23 & 24 \\
\hline
Tap ratio & 0.53 & 0.62 & 0.62 & 0.79 & 0.67 & 0.77 & 0.62 \\
\hline
Bus No. & 25 & 26 & 27 & 28 & 29 \\
\hline
Tap ratio & 0.79 & 0.71 & 0.65 & 0.78 & 0.82 \\
\bottomrule
\end{tabular}
\end{table}

\begin{table}[!t]
\centering
\caption{Simulation Results on Performance of \eqref{eq:full} for Stability Evaluation and Instability Mitigation}
\renewcommand{\arraystretch}{1.1}
\begin{tabular}{cccc}
\toprule
Line outage & Stability certificate & Stability & Stab. w/ min. ctrl. \\
\midrule
(1, 2) & Indefinite & Yes & Yes \\
(1, 39) & Stable & Yes & --- \\
(2, 3) & Indefinite & No & \textbf{No} \\
(2, 25) & Indefinite & Yes & Yes \\
(3, 4) & Stable & Yes & --- \\
(3, 18) & Stable & Yes & --- \\
(4, 5) & Indefinite & No & Yes \\
(4, 14) & Stable & Yes & --- \\
(5, 6) & Indefinite & Yes & Yes \\
(5, 8) & Stable & Yes & --- \\
(6, 7) & Indefinite & Yes & Yes \\
(6, 11) & Indefinite & Yes & Yes \\
(7, 8) & Stable & Yes & --- \\
(8, 9) & Indefinite & Yes & Yes \\
(9, 39) & Stable & Yes & --- \\
(10, 11) & Stable & Yes & --- \\
(10, 13) & Indefinite & No & Yes \\
(12, 11) & Stable & Yes & --- \\
(12, 13) & Stable & Yes & --- \\
(13, 14) & Indefinite & No & Yes \\
(14, 15) & Indefinite & No & Yes \\
(15, 16) & Indefinite & No & \textbf{No} \\
(16, 17) & Indefinite & Yes & Yes \\
(16, 21) & Indefinite & Yes & Yes \\
(16, 24) & Stable & Yes & --- \\
(17, 18) & Stable & Yes & --- \\
(17, 27) & Indefinite & No & Yes \\
(21, 22) & Indefinite & No & Yes \\
(22, 23) & Stable & Yes & --- \\
(23, 24) & Indefinite & Yes & Yes \\
(25, 26) & Indefinite & Yes & Yes \\
(26, 27) & Indefinite & No & \textbf{No} \\
(26, 28) & Stable & Yes & --- \\
(26, 29) & Indefinite & Yes & Yes \\
(28, 29) & Indefinite & No & Yes \\
\bottomrule
\end{tabular} \label{tb:full}
\end{table}

\ifCLASSOPTIONcaptionsoff
  \newpage
\fi



%

\bibliographystyle{IEEEtran}
\bibliography{bibtex/bib/IEEEabrv,bibtex/bib/ref_LTC}




%









\end{document}